\documentclass{amsart}

\usepackage{amsmath, amssymb}
\input xy
\xyoption{all}
\begin{document}

\newtheorem{theorem}{Theorem}[section]
\newtheorem{lemma}[theorem]{Lemma}
\newtheorem{corollary}[theorem]{Corollary}
\newtheorem{fact}[theorem]{Fact}
\newtheorem{proposition}[theorem]{Proposition}
\newtheorem{claim}[theorem]{Claim}
\theoremstyle{definition}
\newtheorem{example}[theorem]{Example}
\newtheorem{remark}[theorem]{Remark}
\newtheorem{definition}[theorem]{Definition}
\newtheorem{question}[theorem]{Question}
\def\id{\operatorname{id}}
\def\PP{\mathbb{P}}
\def\cb{\operatorname{Cb}}
\def\tp{\operatorname{tp}}
\def\stp{\operatorname{stp}}
\def\acl{\operatorname{acl}}
\def\dcl{\operatorname{dcl}}
\def\eq{\operatorname{eq}}
\def\th{\operatorname{Th}}
\def\locus{\operatorname{loc}}

\title{On canonical bases and internality criteria}
\author{Rahim Moosa}
\thanks{Rahim Moosa was supported by an NSERC Discovery Grant.}
\address{Department of Pure Mathematics\\
Faculty of Mathematics\\
University of Waterloo\\
Waterloo\\
Canada N2L 3G1}
\email{rmoosa@math.uwaterloo.ca}
\author{Anand Pillay}
\thanks{Anand Pillay was supported by a Marie Curie Chair.
He also thanks the Humboldt-Stiftung for support during a "Research Awardee
follow-up visit" when some of the work on this paper was done.}
\address{Department of Pure Mathematics\\
School of Mathematics\\
University of Leeds\\
Leeds\\
United Kingdom LS2 9JT}
\email{pillay@maths.leeds.ac.uk}

\date{May 24, 2007}

\begin{abstract}
A criterion is given for a strong type in a finite rank stable theory $T$ to be (almost) internal to a given nonmodular minimal type.
The motivation comes from results of Campana~\cite{campana81} which give criteria for a compact complex analytic space to be ``algebraic" (namely
Moishezon). The {\em canonical base property} for a stable theory states that the type of the canonical base of a stationary type over a realisation is almost internal to the minimal types of the theory.
It is conjectured that every finite rank stable theory has the canonical base property.
It is shown here, that in a theory with the canonical base property, if $p$ is  a stationary type for which there exists a family of types $q_b$, each internal to a non-locally modular minimal type $r$, and such that any pair of independent realisations of $p$ are ``connected'' by the $q_b$'s, then $p$ is almost internal to $r$.
\end{abstract}

\maketitle

\section{Introduction}
This paper is concerned with analysability and internality in the context of
stable theories of finite rank.
While we will briefly recall these notions at the end of this introduction, we refer the reader to~\cite{pillay96} for details on geometric stability theory.

In the many sorted 
structure $\mathcal{A}$ of compact complex spaces, a space $X$ will be an algebraic variety (or rather a 
Moishezon 
space) iff its generic type is internal to the sort of the projective line.
Roughly speaking, Campana proves in~\cite{campana81} that if the 
space $X$ is of K\"ahler-type and ``algebraically connected" in the sense
that there exists an analytic family of {\em algebraic} subvarieties of
$X$ such that any two points of the space $X$ are connected by a finite sequence of
algebraic subvarieties from this family, then $X$ is itself algebraic.

We wanted to find a general
model-theoretic treatment or at least analogue of this result, and this is what the current paper is about.

We work in a saturated model $\overline{M}^{\eq}$ of a complete stable theory $T$ of finite $U$-rank
sort-by-sort. In general we will be concerned with types (over arbitrary small sets of parameters) of possibly infinite tuples that are contained in the algebraic closure of a finite tuple.

For $p(x)\in S(A)$ a stationary type, the
{\em canonical base of $p$} usually means (an enumeration of) the smallest definably closed subset $A_0$ of
$\dcl(A)$ such that $p(x)$ does not fork over $A_0$ and the restriction
of $p$ to $A_0$ is stationary.
It will be more convenient for us to consider (an enumeration of) the {\em algebraic} closure of $A_0$, which we will denote by $\cb(p)$ and also refer to as the canonical base.
This abuse of notation is harmless since throughout this paper we will only be concerned with precision up to interalgebraicity.
We will also write $\cb(a/A)$ to mean $\cb\big(\stp(a/A)\big)$.

The following is a property that is conjectured to hold in all finite rank theories.
Let $\mathbb{P}$ be the set of all nonmodular minimal (stationary $U$-rank $1$) types.

\medskip
\noindent
{\bf Canonical Base Property (CBP).}
{\em If $b=\cb(a/b)$ then $\stp(b/a)$
is almost $\mathbb{P}$-internal.}
\medskip

\begin{remark}
\begin{itemize}
\item[(a)]
The CBP is preserved by naming parameters.
\item[(b)]
The CBP is equivalent to the statement where one replaces $\mathbb{P}$ by the set of {\em all} minimal types. The reason is that from Proposition 1.9 of~\cite{chatzidakis06} (see also ~\cite{pillay-cb}) we know that $\stp(b/a)$ is already (almost) analysable in $\mathbb{P}$, hence orthogonal to all modular minimal types.
\end{itemize}
\end{remark}

The following definition is inspired by a related notion for cycle spaces of compact complex manifolds introduced by Campana in~\cite{campana81}.

\begin{definition}
\label{ample}
Suppose $q(x,y),s(y),p(x)\in S(A)$ are stationary types.
We say that $q$ is a {\em generating family for $p$ over $s$} if
\begin{itemize}
\item[(i)]
$q(x,y)\vdash p(x)\wedge s(y)$,
\item[(ii)]
$q_b(x):=q(x,b)$ is stationary for $b\models s$, and
\item[(iii)]
if $(a,b)\models q$ then $\acl(Aa)\cap\acl(Ab)=\acl(A)$
\end{itemize}
We call $s$ the {\em parameter space} for the family, and  $q_b$ the {\em fibres}, where $b\models s$.
If $s$ is algebraic then we say the family is {\em trivial}.
A generating family is {\em canonical} if
$b\in\cb(q_b)$ for $b\models s$.
It is said to {\em almost separate points of $p$}  if for any $a\models p$ there are only finitely many other realisations of $p$ that lie on all the same fibres as $a$.
\end{definition}

Note that to be given a generating family for a stationary type $\tp(a/A)$ is to be given a tuple $b$ such that $\acl(Aa)\cap\acl(Ab)=\acl(A)$, $\tp(b/A)$ is stationary, and $\tp(a/Ab)$ is stationary.
The generating family is then canonical if $b\in\cb(a/Ab)$, and trivial if $\tp(b/A)$ is algebraic.

\begin{remark}
\label{genrem}
\begin{itemize}
\item[(a)]
If $q$ almost separates points of $p$ then $p^{\overline{M}}\subset\acl(As^{\overline{M}})$.
\item[(b)]
If $q$ is a nontrivial canonical generating family with fibres of $U$-rank $1$, then $q$ almost separates points.
\item[(c)]
If $p$ has a nontrivial canonical generating family then $p$ is not $1$-based.
\end{itemize}
\end{remark}

\begin{proof}
Part (a) is clear.

For part (b) note that as $b\notin\acl(A)$, condition~(iii) of the definition implies that $b\notin \acl(Aa)$.
Hence there exists $b'\notin \acl(Aab)$ realising $\tp(b/Aa)$.
In particular $b'\notin \acl(Ab)$.
As $b'\in\cb(q_{b'})$, we must have 
that $q_{b}\cup q_{b'}$ is not a nonforking extension of both $q_{b}$ and $q_{b'}$, which implies by the rank hypothesis that $q_{b}\cup q_{b'}$ is algebraic.
So $q$ almost separates points of $p$.

For part (c) note that if $p$ is $1$-based then $\cb(q_b)\in\acl(Aa)$, and so by condition~(iii), $b\in\acl(A)$.
That is, the family is trivial.
\end{proof}

The following lemma justifies the term ``generating family"; it says that if $q$ is a generating family for 
$p$ over $s$, and given independent realisations $a$ and $a'$ of $p$, one can get from $a$ to $a'$ by 
``moving along the fibres" of $q$ over $s$. The lemma is due originally to Lascar ~\cite{Lascar} who
stated it in the language of groups of automorphisms, but we give a proof for the sake of completeness.

\begin{lemma}
\label{justify}
The following are equivalent:
\begin{itemize}
\item[(i)]
$\acl(Aa)\cap\acl(Ab)=\acl(A)$
\item[(ii)]
For any $a'\models\stp(a/A)$ independent of $a$ over $A$, there exist $a=a_0,\dots,a_\ell=a'$ and $b=b_0,\dots,b_\ell$ such that
\begin{itemize}
\item
$a_{i+1}\models\stp(a_i/Ab_i)$ and
\item
$b_{i+1}\models\stp(b_i/Aa_{i+1})$,
\end{itemize}
for all $i=0,\dots,\ell-1$.
\end{itemize}
\end{lemma}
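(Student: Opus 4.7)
The plan is to prove the two implications separately. $(ii) \Rightarrow (i)$ is an inductive propagation argument along the chain, while $(i) \Rightarrow (ii)$ is Galois-theoretic via an equivalence relation.

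For $(ii) \Rightarrow (i)$, suppose $c \in \acl(Aa) \cap \acl(Ab)$. I would choose $a' \models \stp(a/A)$ independent of $a$ over $A$ and apply $(ii)$ to obtain the chain $(a_i,b_i)$. By induction on $i$ I would show $c \in \acl(Aa_i) \cap \acl(Ab_i)$: for the $a$-transition, an $A$-automorphism $\sigma$ realising $a_{i+1} \models \stp(a_i/Ab_i)$ fixes $\acl(Ab_i)$ pointwise, hence fixes $c$ (which is in $\acl(Ab_i)$ by inductive hypothesis), so $c \in \sigma(\acl(Aa_i)) = \acl(Aa_{i+1})$; the $b$-transition is symmetric. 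Hence $c \in \acl(Aa')$. Combined with $c \in \acl(Aa)$ and the independence of $a$ and $a'$ over $A$, the standard stable fact that $\acl(Aa) \cap \acl(Aa') = \acl(A)$ for independent realisations concludes $c \in \acl(A)$.

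For $(i) \Rightarrow (ii)$, I would introduce the $A$-invariant equivalence relation $E$ on realisations of $p$ by $x\,E\,x'$ iff there is a chain (in the sense of $(ii)$) connecting $x$ to $x'$. The class $[a]_E$ contains the whole fibre $q_b(\overline{M})$, realisable as a chain of length one (take $a_1 \models q_b$, $b_1 \models \stp(b/Aa_1)$). Consequently $[a]_E$ is setwise fixed by every $A$-automorphism fixing $\acl(Ab)$ pointwise: such an automorphism preserves $E$ and sends $a$ into $q_b(\overline{M}) \subseteq [a]_E$. It is of course also fixed by every $A$-automorphism fixing $a$. A stability argument should then furnish a canonical parameter $c \in \acl^{eq}(Aa) \cap \acl^{eq}(Ab)$ coding $[a]_E$, whence $(i)$ forces $c \in \acl(A)$. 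Consequently $[a]_E$ is determined by $\stp(a/A)$, so $[a']_E = [a]_E$ for any $a' \models \stp(a/A)$, giving the desired chain. Finally, prepending the trivial step $a_0 = a_1 = a$, $b_0 = b$, $b_1 = b$ ensures the prescribed $b$ appears as $b_0$.

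The main obstacle is the codeability of $[a]_E$: a priori the relation $E$ is only an infinite disjunction over chain length of type-definable conditions, so extracting the canonical parameter $c$ requires care. An alternative route is Galois-theoretic: work with the subgroup $H$ of $\mathrm{Aut}(\overline{M}/\acl(A))$ generated by the pointwise stabilisers of $\acl(Ab'')$ as $b''$ varies over $\stp(b/A)$, use $(i)$ to show that $H$ exhausts $\mathrm{Aut}(\overline{M}/\acl(A))$, and then decompose the automorphism carrying $a$ to $a'$ into generators, reading off a chain. Either way, I expect this step to be the technical heart of the argument.
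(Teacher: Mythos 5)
Your proof of (ii)~$\Rightarrow$~(i) is correct and is essentially the paper's argument: the paper also propagates the intersection along the chain via the automorphisms witnessing the two transition conditions, and then invokes the independence of $a$ and $a'$ to conclude $\acl(Aa)\cap\acl(Aa')=\acl(A)$. Phrasing it elementwise in terms of a fixed $c$ rather than as an equality of sets is an immaterial difference.

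For (i)~$\Rightarrow$~(ii), however, there is a genuine gap, and it sits exactly where you flag it. The relation $E$ is a countable increasing union of type-definable conditions (one for each chain length $\ell$), hence $\bigvee$-definable rather than definable or type-definable, and there is no general mechanism in stable theories that attaches a canonical parameter in $\acl^{\eq}$ to a class of such a relation. The ``code $c$'' you want would exist if you could first show that $E$ is in fact type-definable, which in practice means showing there is a uniform bound on the chain length $\ell$ needed to connect $a$ to an independent conjugate. But that uniform bound is precisely the nontrivial content of the implication -- assuming it is close to assuming what you want to prove. Your proposed Galois-theoretic alternative (that $H$ exhausts $\mathrm{Aut}(\overline{M}/\acl(A))$) has the same character: the exhaustion statement is essentially a restatement of the lemma, and you would still need an argument to extract a finite chain from a factorisation of an automorphism.

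The paper closes this gap by a constructive argument that is unavailable to the abstract $E$-approach: it builds the specific sequences $a_0, a_1, \dots$ and $b_0, b_1, \dots$ by taking nonforking extensions of $\stp(a_i/Ab_i)$ and $\stp(b_i/Aa_{i+1})$ to the larger bases $Aab_i$ and $Aaa_{i+1}$, and then observes that the sequence $U(a_ib_i/Aa)$ is nondecreasing and bounded above by $U(ab/A)$. Finite $U$-rank forces stabilisation at some $\ell$, and a short computation then shows $a\mathop{\smile\hskip-0.9em^|\ }_{A} a_\ell b_\ell$, giving the desired independent $a'=a_\ell$ reachable by a chain; stationarity (together with the trivial prepended step you describe) transfers this to an arbitrary independent $a'$. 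This rank-counting termination argument is the genuine content that your sketch leaves open, and it is where the finite-rank hypothesis does the work.
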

\begin{proof}
We suppress the parameters $A$ by naming them as constants to the language.
Assume (i).
By stationarity, in order to show (ii) it suffices to find {\em some} $a'\models\stp(a)$ independent of $a$ satisfying the desired properties.
This is what we do.
Define sequences $a=a_0,a_1,\dots$ and $b=b_0,b_1,\dots$ inductively so that
\begin{itemize}
\item[(1)]
$a_{i+1}$ realises the nonforking extension of $\stp(a_i/b_i)$ to $ab_i$
\item[(2)]
$b_{i+1}$ realises the nonforking extension of $\stp(b_i/a_{i+1})$ to $aa_{i+1}$
\end{itemize}
Note that $U(a_{i+1},b_i/a)\geq U(a_i,b_i/a).$
Indeed,
\begin{eqnarray*}
U(a_{i+1},b_i/a)
& = &
U(a_{i+1}/ab_i)+U(b_i/a)\\
 & = &
U(a_{i+1}/b_i)+U(b_i/a)\\
& = &
U(a_i/b_i)+U(b_i/a)\\
& \geq &
U(a_i/ab_i)+U(b_i/a)\\
& = &
U(a_i,b_i/a)
\end{eqnarray*}
Similarly, $U(a_{i+1},b_{i+1}/a)\geq U(a_{i+1},b_i/a)$.
So we get a sequence $$U(a,b/a)\leq U(a_1,b/a)\leq U(a_1,b_1/a)\leq U(a_2,b_1/a)\leq\cdots\leq U(a,b).$$
Hence, for some $\ell\geq 0$ it must be the case that $U(a_\ell,b_\ell/a)=U(a_{\ell+1},b_\ell/a)$.
It follows that
\begin{eqnarray*}
U(a_\ell/ab_\ell)+U(b_\ell/a)
& = &
U(a_\ell,b_\ell/a)\\
& = &
U(a_{\ell+1},b_\ell/a)\\
& = &
U(a_{\ell+1}/ab_\ell)+U(b_\ell/a)\\
& = &
U(a_{\ell+1}/b_\ell)+U(b_\ell/a)\\
& = &
U(a_\ell/b_\ell)+U(b_\ell/a)
\end{eqnarray*}
That is, $U(a_\ell/ab_\ell)=U(a_\ell/b_\ell)$ and so $a_\ell$ and $a$ are independent over $b_\ell$.
On the other hand, $b_\ell$ and $a$ are independent over $a_\ell$ by construction.
So
$\cb(a/a_\ell b_\ell)\subseteq\acl(a_\ell)\cap\acl(b_\ell)$.
But $a_\ell b_\ell$ has the same type as $ab$, and $\acl(a)\cap\acl(b)=\acl(\emptyset)$.
What we have shown is the following statement that we will use again later:
\begin{itemize}
\item[($*$)]
If $\acl(a)\cap\acl(b)=\acl(\emptyset)$ and $a=a_0,a_1,\dots$ and $b=b_0,b_1,\dots$ satisfy~(1) and~(2),  then there exists $\ell\geq 0$ such that $a$ is independent of $a_\ell b_\ell$ over $\emptyset$.
\end{itemize}
In particular, $a_{\ell}$ and $a$ are independent over the empty set.
Setting $a'=a_\ell$, we have shown (ii).

For the converse, suppose (ii) holds and let $\sigma_0,\dots,\sigma_{\ell-1}$ and $\tau_1,\dots,\tau_\ell$ be automorphisms such that $\sigma_i(a_i b_i)=a_{i+1}b_i$ and $\tau_{i+1}(a_{i+1}b_i)=a_{i+1}b_{i+1}$, for all $i=0,\dots,\ell-1$.
Hence, for each $i$, $\sigma_i$ witnesses that
$\acl(a_i)\cap\acl(b_i)=\acl(a_{i+1})\cap\acl(b_i)$
and $\tau_{i+1}$ witnesses that
$\acl(a_{i+1})\cap\acl(b_i)=\acl(a_{i+1})\cap\acl(b_{i+1})$.
So $\acl(a)\cap\acl(b)=\acl(a')\cap\acl(b_\ell)$.
In particular, $\acl(a)\cap\acl(b)\subseteq\acl(a)\cap\acl(a')$.
But the independence of $a$ and $a'$  implies that the latter is $\acl(\emptyset)$, as desired.
\end{proof}

The purpose of this note is to prove the following theorem which is motivated by, and in part recovers, Campana's ``algebraicity criteria'' for compact K\"ahler manifolds (cf. Th\'eor\`emes ~2 and~3 of~\cite{campana81}). It depends heavily on the results of Chatzidakis ~\cite{chatzidakis06}.

\begin{theorem}
\label{ample-by-moishezon=moishezon}
Suppose that a stationary type $p(x)\in S(A)$ has a generating family with a fibre that is almost internal to a nonmodular minimal type $r$.
\begin{itemize}
\item[(i)]
Then $r$ is nonorthogonal to $A$ (and so {\em every} fibre of the generating family is almost internal to $r$) and $p$ is almost analysable in $r$.
\item[(ii)]
 If moreover the CBP holds for $T$, then $p$ is almost internal to $r$.
\end{itemize}
\end{theorem}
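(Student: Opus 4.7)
My plan hinges on the canonical base $c := \cb(a/Ab)$ for a fixed $(a,b) \models q$. One has $c \in \acl(Ab)$, $a$ independent of $b$ over $Ac$, $\tp(a/Ac)$ stationary, and (by choosing a generic $b'' \models \tp(b/Ac)$ independent from $a$ over $Ac$, so that $\tp(a/Ab'')$ is conjugate to $q_b$) the type $\tp(a/Ac)$ is also almost $r$-internal. Since $c \in \acl(Ab)$, condition~(iii) gives $\acl(Aa) \cap \acl(Ac) \subseteq \acl(A)$. I assume throughout that $p$ is nonalgebraic, else the theorem is trivial.

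For part~(i), I would deduce $r \not\perp A$ from an orthogonality observation: were $r \perp A$, the nonforking extension $r|Ac$ would be almost orthogonal to every type over $Ac$, contradicting the almost $r$-internality of the non-algebraic $\tp(a/Ac)$. Having $r \not\perp A$, replace $r$ by a nonorthogonal $\acl(A)$-based minimal $r_0$; since $r_0$ is $A$-invariant and almost internality to $r$ and to $r_0$ coincide here, every $A$-conjugate fibre $q_{b'}$ is almost $r_0$- (hence $r$-) internal. For the analysability of $p$, I would invoke the standard fact from \cite{chatzidakis06}, available via the theory of the $r$-internal cover of a nonmodular minimal type, that the canonical base of a stationary almost-$r$-internal type is itself almost $r$-internal. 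Applied here, $\stp(c/A)$ is almost $r$-internal, and combined with $\tp(a/Ac)$ almost $r$-internal this exhibits $\tp(a/A)$ as almost analysable in $r$ of depth at most two.

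For part~(ii), assume the CBP. Applied to $c = \cb(a/Ac)$ (with $A$ named as constants, as permitted by Remark~1.1(a)), the CBP yields $\stp(c/Aa)$ almost $\mathbb{P}$-internal. Since $\stp(c/A)$ is almost $r$-internal by part~(i), orthogonality bookkeeping shows any minimal type appearing in the $\mathbb{P}$-analysis of $\stp(c/Aa)$ is nonorthogonal to $r$, which upgrades $\stp(c/Aa)$ to almost $r$-internal. We now have $\tp(a/Ac)$ and $\stp(c/Aa)$ both almost $r$-internal, plus $\acl(Aa) \cap \acl(Ac) = \acl(A)$. Take $a' \models \stp(a/A)$ generic independent from $a$ over $A$ and apply Lemma~\ref{justify} to the pair $(a,c)$; each step $\stp(a_{i+1}/Ac_i)$ of the resulting zigzag is a conjugate of $\stp(a/Ac)$, hence almost $r$-internal, and each $\stp(c_{i+1}/Aa_{i+1})$ is a conjugate of $\stp(c/Aa)$, hence almost $r$-internal. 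Unwinding places $a'$ in the almost-$r$-internal hull of $Aa$, so $\tp(a'/Aa)$ is almost $r$-internal. Combined with $a'$ being independent of $a$ over $A$, transitivity of nonforking promotes this to $\tp(a'/A) = p$ being almost $r$-internal.

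The main obstacle I anticipate is the upgrade of the CBP output from almost $\mathbb{P}$-internal to almost $r$-internal, and verifying that the zigzag argument actually yields internality rather than mere analysability. Both rest on careful orthogonality bookkeeping that exploits the canonical nature of $c$, the nonmodularity of $r$, and condition~(iii) (which continues to hold with $c$ in place of $b$) to rule out any leakage into minimal types orthogonal to $r$ along the zigzag.
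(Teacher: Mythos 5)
Your outline diverges from the paper's proof at several points, and two of the divergences are genuine gaps.

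\textbf{Part (i).} Your argument that $r\not\perp A$ is not valid as written: $r\perp A$ does not entail that (a nonforking extension of) $r$ is orthogonal to types over the larger set $Ac$, so the ``contradiction'' with almost $r$-internality of $\tp(a/Ac)$ does not materialise. More seriously, the ``standard fact'' you invoke --- that the canonical base of an almost $r$-internal stationary type is itself almost $r$-internal --- is false in this generality and is not what Chatzidakis proves. (A counterexample: in $\mathrm{DCF}_0$ with $\delta a = c$ and $c$ $\delta$-transcendental, $\tp(a/c)$ is internal to the constants with $c=\cb(a/c)$, yet $\tp(c)$ is orthogonal to the constants. Your side condition $\acl(Aa)\cap\acl(Ac)=\acl(A)$ removes this particular example, but what survives under that side condition, without the CBP, is only \emph{analysability} of $\stp(c/A)$ in $\mathbb{P}$ --- this is precisely Chatzidakis' Proposition~1.9, i.e.\ Lemma~\ref{chatzidakisprop} here, and it additionally requires first arranging the symmetric hypothesis $a=\cb(c/Aa)$, a nontrivial reduction carried out in the proof of Proposition~\ref{theoremrank1}.) The paper obtains $r\not\perp A$ and the analysability of $p$ from the decomposition produced by Lemma~\ref{chatzidakisprop}, applied only after first reducing to rank-$1$ fibres via Lemma~\ref{moishezon=veryample}; that reduction is what forces the decomposition to be a single piece ($k=1$) nonorthogonal to $q_b$. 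Your sketch bypasses this reduction, so even the corrected analysability argument is incomplete.

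\textbf{Part (ii).} This is the bigger problem. Your zigzag argument is, in effect, the proof of Proposition~\ref{ucbp-zoe} --- but that proposition requires the \emph{Uniform} CBP, not the CBP. The reason is exactly the step you gloss over as ``unwinding places $a'$ in the almost-$r$-internal hull of $Aa$'': with the plain CBP you get that $\stp(a_{i+1}/Ac_i)$ and $\stp(c_{i+1}/Aa_{i+1})$ are each almost $r$-internal, but these are internality statements over \emph{shifting bases}, and composing them only yields analysability of $\stp(a_\ell/Aa)$, not internality. Propagating internality along the zigzag over the \emph{fixed} base $a$ is precisely what the UCBP provides (each application gives $\stp(a_ib_i/a)$, not merely $\stp(a_i/b_{i-1})$, almost $\mathbb{P}$-internal). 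Whether the CBP implies the UCBP is stated as an open problem (Question~\ref{strongcbp} area, Question 3.6) --- so your route is not available under the hypotheses of the theorem. The paper avoids the zigzag entirely: it invokes Chatzidakis' Theorem~\ref{strongcbp} (a genuine theorem, with a nontrivial proof, that the CBP implies $\stp\big(b/\acl(a)\cap\acl(b)\big)$ is almost $\mathbb{P}$-internal) via Corollary~\ref{corstrongcbp} to get that the \emph{parameter} type $s(y)$ is almost $\mathbb{P}$-internal, combines this with the analysability from part (i) to get $s$ almost $r$-internal, and then uses the almost-separation-of-points property of a rank-$1$ canonical generating family (Remark~\ref{genrem}(b)) together with $p^{\overline{M}}\subseteq\acl(As^{\overline{M}})$ to transfer internality from $s$ to $p$. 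None of these ingredients appears in your sketch.

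In short: the orthogonality step and the ``canonical base of an internal type is internal'' step are not justified, and the zigzag for (ii) silently upgrades the CBP to the UCBP. To follow the paper's line you would need to (a) reduce to rank-$1$ fibres via Lemma~\ref{moishezon=veryample}, (b) apply Lemma~\ref{chatzidakisprop} after symmetrising the canonical-base hypotheses, and (c) for (ii), cite Theorem~\ref{strongcbp} and use almost separation of points rather than a zigzag.
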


The rest of the paper is organised as follows.
In Section~\ref{ccm} we elaborate on the connections between the notions presented here and the model theory of compact complex manifolds, in Section~\ref{cbpconsequences} we discuss a natural strengthening of the CBP which is true in compact complex manifolds and relate it to the work of Zo\'e Chatzidakis~\cite{chatzidakis06}, and finally in Section~\ref{criteria} we prove Theorem~\ref{ample-by-moishezon=moishezon}.

\medskip

We end this introduction with a brief discussion of internality and analysability. If $p(x)\in S(A)$ is stationary and $\bf P$ is some
family of partial types (over possibly differing sets) we say $p$ is {\em 
internal} to $\bf P$ or is $\bf P$
-internal, if there is
some $C$ containing $A$ and $a$ realizing $p$ independent from $C$ over $A$, such that 
$a$ is in the {\em definable closure} of $C$ together with some realisations of some partial types in $\bf P$ 
whose domains are contained in $C$. If we replace 
definable closure by {\em algebraic closure} we get the notion of $p$ being {\em almost internal} to $\bf P$. 
We will be using this notion in at least two cases, first where $\bf P$ is some $\acl(\emptyset)$-invariant
family of minimal (stationary $U$-rank $1$) types over varying domains, such as the family of all nonmodular 
minimal types, and second where $\bf P$ is a single 
minimal type (over some set) $q$ say. In the second case the hypothesis that $p\in S(A)$ is almost internal
to $q$ implies that $q$ is nonorthogonal to all of its conjugates over $\acl(A)$ -- that is, $q$ is {\em nonorthogonal to $A$}.
So almost internality of $p$ to $q$ is equivalent to almost internality of $p$ to $\bf Q$ where 
$\bf Q$ is the family of conjugates of $q$ over $\acl(A)$. 

We also have the standard notion of analysability; $p(x)\in S(A)$ is
{\em analysable} in $\bf P$ if there are $a_0\dots a_n$ such that 
$\stp(a_{i+1}/Aa_{0}\dots a_{i})$ is internal
to $\bf P$ for $i=0,\dots,n-1$ and $a\in \acl(a_{n})$. So algebraic closure is built into the definition. We 
might also want to define {\em almost analysability} by only requiring that $\stp(a_{i+1}/Aa_0\dots a_{i})$ be 
almost internal to $\bf P$ for $i=0,\dots,n-1$, and in this case we may as well require that $a_n = a$. Assuming 
that the family $\bf P$ is $\acl(A)$-invariant, it is not hard to see that $p(x)\in S(A)$ is almost analysable
in $\bf P$ iff $p(x)$ is analysable in $\bf P$. 
As in the case of internality, if $q$ is a minimal type nonorthogonal to $A$ then almost analysability of $p$ in $q$ is equivalent to (almost) analysability of $p$ in $\bf Q$ where 
$\bf Q$ is the family of conjugates of $q$ over $\acl(A)$.

\bigskip

\section{Compact complex manifolds}
\label{ccm}
Much of the discussion in the introduction -- including the CBP, the notion of a generating family, as well as the statement of Theorem~\ref{ample-by-moishezon=moishezon} itself -- is informed by certain aspects of the model theory of compact complex manifolds.
In this section we aim to make these origins and connections precise.
The material here is almost entirely of an expository nature.
Let $\mathcal{A}$ denote the multi-sorted structure where there is a sort for each irreducible compact complex-analytic space and the language consists of a predicate for each complex-analytic subset of each finite cartesian product of sorts.
The theory $\th(\mathcal{A})$ admits quantifier elimination and is, sort-by-sort, of finite Morley rank.
A survey of some of the model theory of $\th(\mathcal{A})$ can be found in~\cite{moosa-ccs}.

We work in a fixed sufficiently saturated elementary extension $\mathcal{A}'$ of $\mathcal{A}$.
Here it is more natural to work only with types of finite tuples.
Since the theory is totally transcendental there will be no loss of generality in doing so.
It may however lead to some (harmless) abuse of notation; for example, we will often write $\cb(p)$ when we really mean a finite tuple whose algebraic closure is $\cb(p)$.

Among the sorts of $\mathcal{A}$ we have the projective line $\mathbb{P}(\mathbb{C})$ in which the complex field is definable.
In $\mathcal{A'}$ the interpretation of this sort is $\mathbb{P}(\mathbb{C}')$ where $\mathbb{C}'$ is the corresponding elementary extension of the complex field.

The nonmodular minimal types in this theory are exactly those that are nonorthogonal to the generic type of the projective line (which is equal to the generic type of $\mathbb{C}'$).
Indeed, this follows from the truth of the Zilber dichotomy in $\th(\mathcal{A})$ via Zariski geometries plus the fact that the only infinite field definable in $\mathcal{A'}$ is $\mathbb{C}'$ (see Corollary~4.8 of~\cite{ret}).
However, it also follows directly from a theorem of Campana~\cite{campana80} (due independently to Fujiki~\cite{fujiki82}) as observed by the second author in~\cite{pillay01}.
We will discuss this theorem of Campana's later, as it is tied up with the issues we are concerned with in this paper.
In any case, it follows that almost internality to the set of nonmodular minimal types coincides in this theory with almost internality to the generic type of the projective line.
Moreover, as the following summarising fact describes, almost internality to the projective line has a very natural geometric meaning.

First some notation: 
given a tuple $a$ from $\mathcal{A'}$ by the {\em locus} of $a$, denoted $\locus(a)$, we mean the smallest complex-analytic set whose interpretation in $\mathcal{A'}$ contains $a$.

\begin{fact}
\label{algebraicity}
Suppose $p(x)=\tp(a/b)$ is a stationary type.
Let $X=\locus(a)$, $S=\locus(b)$, $G=\locus(ab)\subseteq X\times S$, and $G\to S$ and $G\to X$ the natural projections.
Then the following are equivalent:
\begin{itemize}
\item[(a)]
$p$ is almost internal to the set of nonmodular minimal types.
\item[(b)]
$p$ is almost internal to the projective line.
\item[(c)]
$p$ is internal to the projective line.
\item[(d)]
After base change $G\to S$ is a {\em Moishezon morphism}: that is, for some complex-analytic space $T$ over $S$, the fibred-product $G\times_ST$ bimeromorphically embeds into a projective linear space $\mathbb{P}(\mathcal{F})$ over $T$, where $\mathcal{F}$ is some coherent analytic sheaf on $T$.
\item[(e)]
For some complex-analytic space $\widehat{T}$ over $S$, the fibred-product $G\times_S\widehat{T}$ bimeromorphically embeds into $\mathbb{P}_n(\mathbb{C})\times \widehat{T}$ over $\widehat{T}$, for some $n\geq 0$.
\end{itemize}
\end{fact}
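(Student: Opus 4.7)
The plan is to run the equivalences in the cycle (a)$\Leftrightarrow$(b), (b)$\Leftrightarrow$(c), (c)$\Leftrightarrow$(e), and (d)$\Leftrightarrow$(e). The equivalence (a)$\Leftrightarrow$(b) is essentially contained in the remarks immediately preceding the fact: by the Zilber dichotomy for Zariski geometries (together with the fact that the only infinite field definable in $\mathcal{A}'$ is $\mathbb{C}'$), or alternatively by the Campana--Fujiki theorem, the nonmodular minimal types in $\th(\mathcal{A})$ are precisely those nonorthogonal to the generic type of the projective line. Since almost internality of a stationary type to a family of minimal types depends only on the family up to nonorthogonality, the two almost-internality notions coincide.

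For (b)$\Leftrightarrow$(c) the substantive direction is (b)$\Rightarrow$(c); this relies on the fact that the field $\mathbb{C}'$ is interpretable in the projective line sort and eliminates imaginaries. Concretely, suppose $C\supseteq b$ is a parameter set with $a\models p$ independent from $C$ over $b$, and $a\in\acl(Cc)$ for some tuple $c$ from $\mathbb{P}(\mathbb{C}')$. The finite set of $Cc$-conjugates of $a$ is $Cc$-definable and, living in a sort interpretable in $\mathbb{C}'$, is coded by a tuple $e$ from $\mathbb{C}'$. Augmenting $c$ by $e$, chosen inside a suitably independent copy of $\mathbb{C}'$ over $b$ so as not to disturb independence, produces a tuple from $\mathbb{P}(\mathbb{C}')$ over which $a$ is already in the definable closure.

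The core of the fact is (c)$\Leftrightarrow$(e), translating model-theoretic internality into Moishezon-ness after base change. Assume (c): there is a tuple $d$ whose locus over $b$ is some complex-analytic space $\widehat{T}$ over $S$, with $d$ independent of $b$, and a tuple $\alpha$ from $\mathbb{P}_n(\mathbb{C}')$ with $a\in\dcl(bd\alpha)$. Since definable functions in $\th(\mathcal{A})$ correspond to meromorphic maps, the $\dcl$ relation is witnessed by a dominant meromorphic map $\widehat{T}\times\mathbb{P}_n\to G\times_S\widehat{T}$ over $\widehat{T}$. The generic fibre of the projection from its graph onto the target being a single point (by the choice of $\alpha$), the map admits a meromorphic inverse onto its image, producing a bimeromorphic embedding $G\times_S\widehat{T}\hookrightarrow \mathbb{P}_n\times\widehat{T}$ over $\widehat{T}$. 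The converse is immediate: a bimeromorphic embedding provides a $bd$-definable (meromorphic) function from $a$ to a tuple in $\mathbb{P}_n(\mathbb{C}')$. Finally, (d)$\Leftrightarrow$(e) is a routine geometric fact: any projective bundle $\mathbb{P}(\mathcal{F})\to T$ trivialises after base change to a suitable $\widehat{T}\to T$, for instance a flag space parametrising frames of $\mathcal{F}$.

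The main obstacle I expect is the dictionary in (c)$\Leftrightarrow$(e): correctly identifying loci of types with complex-analytic spaces and definable closure with meromorphic maps, while tracking independence and genericity. Most of this is standard in the translation between $\th(\mathcal{A})$ and compact complex geometry, for which I would cite~\cite{moosa-ccs}.
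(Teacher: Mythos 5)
Your handling of (a)$\Leftrightarrow$(b) matches the paper (both appeal to the Zilber dichotomy / Campana--Fujiki to identify nonmodular minimal types with those nonorthogonal to the projective line), and your sketches of (c)$\Leftrightarrow$(e) and (d)$\Leftrightarrow$(e) are plausible, though the paper simply outsources the whole (c)$\Leftrightarrow$(d)$\Leftrightarrow$(e) block to Proposition~4.4 of~\cite{ret} and does not re-derive it.

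The real problem is your argument for (b)$\Rightarrow$(c). You claim that the finite set of $Cc$-conjugates of $a$ is ``living in a sort interpretable in $\mathbb{C}'$'' and hence is coded by a tuple $e$ from $\mathbb{C}'$. This is not the case: those conjugates are elements of the sort $X=\locus(a)$, which is an arbitrary compact complex space --- there is no reason $X$ (or any set of its points) should be interpretable in $\mathbb{C}'$. Elimination of imaginaries in the field $\mathbb{C}'$ lets you code finite sets of $\mathbb{C}'$-tuples, not finite sets of points of a general sort. If the code of that finite set did land in $\mathbb{C}'$, one would already essentially have the conclusion, so the argument is circular. In fact ``almost internal $\Rightarrow$ internal'' for the projective line is not a soft consequence of elimination of imaginaries; it genuinely uses complex-geometric input. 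The paper's proof proceeds instead by a relative Moishezon-morphism argument: from $a\in\acl(cd)$ with $d$ from $\mathbb{P}(\mathbb{C}')$, one forms $T=\locus(c)$, $H=\locus(ac)$, $P=\locus(dc)$, $Z=\locus(adc)$, observes that $Z\to P$ is generically finite-to-one (hence Moishezon) and $P\to T$ is Moishezon, so the composite $Z\to T$ is Moishezon, and then feeds this back through the (c)$\Leftrightarrow$(d) equivalence applied to $\stp(ad/c)$ to conclude that $\tp(a/c)$ is internal. You need some version of that geometric step --- the slogan being that a generically finite-to-one meromorphic cover of a Moishezon morphism is again Moishezon --- to close this implication.
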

\begin{proof}
The equivalence of (c), (d), and (e) is Proposition~4.4 of~\cite{ret}.
In the preceeding discussion we explained the equivalence of (a) and (b).
It remains to prove that (b) implies (c).
In the case when the tuple $b$ comes from the
standard model $\mathcal{A}$ this is stated in ~\cite{pillay2000} and boils down to the fact that if $X$ is a Moishezon space and $X'$ is another space such that there is dominant 
generically finite-to-one meromorphic map from $X'$ to $X$, then $X'$
is also Moishezon.
To prove that (b) implies (c) in general we actually need a relative version of the above fact, which we now explain.
If $\tp(a/b)$ is almost internal to
the projective line then there exist a tuple $c$ extending $b$ such that $a$ is independent of $c$ over $b$, and a tuple $d$ from $\mathbb{P}(\mathbb{C})$ such that $a\in\acl(cd)$.
It suffices to prove that $\tp(a/c)$ is $\mathbb{P}(\mathbb{C})$-internal.
Let $T=\locus(c)$, $H=\locus(ac)\subseteq X\times T$, $P=\locus(dc)\subseteq \mathbb{P}^n(\mathbb{C})\times T$, and $Z=\locus(adc)\subseteq H\times_T P$.
We then have the following diagram where all the maps are the natural projections:
$$\xymatrix{& Z\ar[dl]\ar[dr] &\\H\ar[dr] & &P\ar[dl]\\& T}$$
Now $P\to T$ is Moishezon as $P\subseteq\mathbb{P}^n(\mathbb{C})\times T$.
Also $Z\to P$ is generically finite-to-one since $a\in\acl(cd)$, and all generically finite-to-one maps are Moishezon.
Hence the composition $Z\to T$ is Moishezon.
(See for example Section~1 of~\cite{fujiki82} for these facts about Moishezon morphisms.)
By the equivalence of parts (c) and (d) this implies that $\stp(ad/c)$ is $\mathbb{P}(\mathbb{C})$-internal.
Hence $\tp(a/c)$ is $\mathbb{P}(\mathbb{C})$-internal.
\end{proof}

In particular, note that if $p$ is a stationary type over $\emptyset$, then $p$ is (almost) internal to the 
projective line if and only if it is the generic type of a {\em Moishezon variety}: an irreducible 
complex-analytic space that is bimeromorphic to a projective variety.

We now explain why the CBP is true in $\th(\mathcal{A})$.
This is a consequence of a theorem of Campana~\cite{campana80} already referred to earlier.
Indeed, it was this theorem in complex analytic geometry that inspired the second author to introduce the CBP in~\cite{pillay01}.
To describe Campana's theorem we need to recall the Barlet space of cycles.
For $X$ any complex-analytic space, a {\em $k$-cycle of $M$} is a finite linear combination $\displaystyle Z=\sum_in_iZ_i$ where the $Z_i$'s are distinct $k$-dimensional irreducible compact complex-analytic subsets of $X$, and each $n_i$ is a positive integer called the {\em multiplicity} of $Z_i$ in $Z$.
By $|Z|$ we mean the underlying set or {\em support} of $Z$, namely $\displaystyle \bigcup_iZ_i$.
We denote the set of all $k$-cycles of $X$ by $\mathcal{B}_k(X)$, and the set of all cycles of $X$ by $\displaystyle \mathcal{B}(X):=\bigcup_k\mathcal{B}_k(X)$.
In~\cite{barlet} Barlet endowed $\mathcal{B}_k(X)$ with a natural structure of a complex-analytic space whereby if for $s\in \mathcal{B}_k(X)$ we let $Z_s$ denote the cycle respresented by $s$, then the set
$\{(x,s):s\in\mathcal{B}_k(X), x\in |Z_s|\}$
is a complex-analytic subset of $X\times \mathcal{B}_k(X)$.
Equipped with this complex structure, $\mathcal{B}(X)$ is called the {\em Barlet space of $X$}.
When $X$ is a projective variety the Barlet space coincides with the Chow scheme.
In~\cite{campanathesis} it is shown that
$$\mathcal{B}^*(X):=\{s\in\mathcal{B}(X):Z_s\text{ is irreducible with multiplicity }1\}$$
 is a {\em Zariski open} subset of  $\mathcal{B}(X)$: its complement in $\mathcal{B}(X)$ is a proper complex-analytic subset.
An irreducible component of $\mathcal{B}(X)$ is {\em prime} if it has nonempty intersection with $\mathcal{B}^*(X)$.

Note that even for a compact complex-analytic space $X$ it is not necessarily the case that the (prime) irreducible components of $\mathcal{B}(X)$ are again compact.
Indeed, the condition that all the prime components of $\mathcal{B}(X^n)$ for all $n\geq 0$ turns out to be important model-theoretically; it is equivalent to the property introduced by the first author in~\cite{sat} of being {\em essentially saturated}.
This property is satisfied for example, by all holomorphic images of compact K\"ahler manfiolds (these are the so-called {\em K\"ahler-type} spaces introduced by Fujiki in~\cite{fujiki78}).

\begin{theorem}[Campana~\cite{campana80}]
\label{campanacbp}
Suppose $X$ is a compact complex-analytic space and $S$ is an irreducible compact complex-analytic subset of $\mathcal{B}(X)$ such that $S\cap\mathcal{B}^*(X)\neq\emptyset$.
Let $Z_S:=\{(x,s):s\in S, x\in |Z_s|\}\subseteq X\times S$ denote the {\em graph} of the family of cycles $S$.
Then the natural projection $p_X:Z_S\to X$ is a Moishezon morphism.\qed
\end{theorem}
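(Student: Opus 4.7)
The plan is to verify the definition of Moishezon morphism directly: to find a proper surjective base change $T \to X$ and a coherent analytic sheaf $\mathcal{F}$ on $T$ such that $Z_S \times_X T$ admits a bimeromorphic embedding into $\mathbb{P}(\mathcal{F})$ over $T$. Since Moishezonness of a proper morphism can be tested generically (and $p_X$ is proper because $S$ is compact and $Z_S \subseteq X \times S$ is closed), the heart of the matter is to show that the generic fiber $F_x := \{s \in S : x \in |Z_s|\}$ -- the family of cycles in $S$ passing through $x$ -- is a Moishezon space, in a way that is sufficiently uniform in $x$ to globalize.

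First I would reduce to the case of a \emph{prime} family, where the generic cycle $Z_s$ is irreducible with multiplicity one; this is given by the openness of $\mathcal{B}^*(X) \cap S$ in $S$. The core construction would then use intersection-theoretic data to separate points of $F_x$. Given two cycles $Z_{s_1}, Z_{s_2}$ in $S$ both passing through $x$, their local incidence at $x$ -- intersection multiplicities, tangent cones when $x$ is smooth on both, or the intersection cycle $Z_{s_1} \cdot Z_{s_2}$ when they meet properly -- provides invariants that generically distinguish $s_1$ from $s_2$. This naturally suggests taking $T = Z_S$ (mapping to $X$ via $p_X$) and building $\mathcal{F}$ from the normal-cone or higher symbol data of the diagonal in $Z_S \times_X Z_S$; a sufficiently high-order jet expansion along this diagonal should supply enough sections to cut out $Z_S \times_X Z_S$ birationally inside some $\mathbb{P}(\mathcal{F})$ over $T$.

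The main obstacle -- and where the depth of Campana's argument lies -- is the relative and global coherence of this plan: since $X$ itself is not assumed Moishezon, the sheaf $\mathcal{F}$ must be built entirely from cycle-theoretic data, with no ambient projective structure on $X$ to lean on. Patching the pointwise separating functions into a genuine coherent sheaf on $T$, and verifying bimeromorphicity of the resulting map, seems to require Campana's machinery of meromorphic reductions together with detailed properties of the Barlet space, in particular the functorial behaviour of incidence subfamilies. The conceptual content is striking: while the individual cycles $Z_s$ need not live on any Moishezon space, the \emph{mutual} incidence of cycles through a common point already carries enough algebraic information to render the incidence morphism $p_X$ Moishezon.
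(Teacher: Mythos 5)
The paper does not prove this statement: it is quoted verbatim as a theorem of Campana from~\cite{campana80}, with the $\square$ placed in the statement itself to signal that the proof lies outside the paper. So there is no ``paper's own proof'' for your sketch to be measured against; the comparison has to be with Campana's original argument.

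Taken on its own terms, your proposal does not close. You correctly reduce to the prime case and correctly identify that the crux is showing each incidence fibre $F_x=\{s\in S: x\in|Z_s|\}$ is Moishezon, uniformly in $x$. But the proposed mechanism --- building $\mathcal{F}$ from ``sufficiently high-order jet expansion'' of $Z_s$ along the diagonal of $Z_S\times_X Z_S$ --- has a genuine obstruction you do not address: there is no a priori bound, uniform over $(s_1,s_2)\in S_x\times S_x$, on the jet order needed to separate $Z_{s_1}$ from $Z_{s_2}$ at $x$. Two distinct irreducible analytic germs through $x$ can agree to arbitrarily high finite order, so the truncated-jet map has no reason to be generically injective at any fixed order, and without such a bound the sheaf $\mathcal{F}$ cannot be assembled coherently. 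You acknowledge the gap yourself (``seems to require Campana's machinery of meromorphic reductions''), but that concession is exactly the missing theorem; as written, the argument is a plan, not a proof. For the record, Campana's actual proof does not go via jets at all: it exploits the functoriality of the Barlet space (direct images and intersections of families of cycles) to produce, from the incidence variety, an auxiliary analytic family mapping $F_x$ meromorphically and generically finitely into a cycle space of a projective space, which is a Chow variety and hence projective; Moishezonness of $p_X$ then follows by varying $x$. This is a genuinely different --- and more global --- route than the local separation-by-jets picture you propose.
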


Let us see how this implies the CBP.
Given a stationary type $p(x)=\tp(a/b)$ let $X=\locus(a), Y=\locus(b)$, and $G=\locus(a,b)\subseteq X\times Y$.
By stationarity the general fibres of $G$ over $Y$ are irreducible complex-analytic subsets of $X$.
The universal property of the Barlet space, along with the geometric flattenning theorem of~\cite{barlet79}, gives us a meromorphic map $\phi:Y\to\mathcal{B}(X)$ such that for general $y\in Y$, $G_y=Z_{\phi(y)}$ (see for example Proposition~2.20 of~\cite{campanapeternell94}).
The condition that $b=\cb(p)$ translates into the statement that $\phi$ is generically finite-to-one onto its image.
Let $S$ denote the image of $Y$ under $\phi$.
Then $S$ is an irreducible compact complex-analytic subset, $S\cap\mathcal{B}^*(X)\neq\emptyset$, and $\id\times\phi$ induces a dominant generically finite-to-one meromorphic map from $G$ to $Z_S$ over $X$.
By Theorem~\ref{campanacbp}, $p_X:Z_S\to X$, and hence $G\to X$, is Moishezon.
By Fact~\ref{algebraicity}, $\stp(b/a)$ is internal to the projective line.
That is, the CBP holds in $\th(\mathcal{A})$.

Finally in this section we explain the origins of our notion of a generating family for a stationary type.
Fix an irreducible compact complex-analytic space $X$.
In~\cite{campana81} Campana calls a family of cycles $(Z_s:s\in S)$, where $S$ is an irreducible compact complex-analytic subset of $\mathcal{B}(X)$ with $S\cap\mathcal{B}^*(X)\neq\emptyset$, a {\em generating} family for $X$ if for any $x,x'\in X$ there exists a sequence $x=x_0,x_1,\dots,x_\ell=x'$ in $X$ and $s_1,\dots,s_\ell\in S$ such that $x_i$ and $x_{i+1}$ both lie in the cycle $|Z_{s_{i+1}}|$, for all $i=0,\dots,\ell-1$.
That is, if every pair of points in $X$ can be connected by moving along the cycles in the family.
The motivation for Definition~\ref{ample} is made explicit by the following:

\begin{proposition}
\label{mtgen-campanagen}
Suppose $q=\tp(a,b)$ is a canonical generating family for $p=\tp(a)$ over $s=\tp(b)$ in the sense of Defintion~\ref{ample}.
Let $X=\locus(a), Y=\locus(b)$, and $G=\locus(a,b)\subseteq X\times Y$.
Let $\phi:Y\to\mathcal{B}(X)$ be the meromorphic map given by the universal property of the Barlet space, as discussed above.
Let $S$ denote the image of $Y$ under $\phi$.
Then $(Z_s:s\in S)$ is a generating family of cycles for $X$.
\end{proposition}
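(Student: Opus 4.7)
The plan is to pair the chain given by Lemma~\ref{justify} with the Remmert proper mapping theorem, passing from ``generic pairs are connected'' to ``all pairs are connected''. I work throughout with $A=\emptyset$, since $p$, $q$, $s$ are types over $\emptyset$.

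First I would extract the geometric consequence of canonicality. The assumption $b\in\cb(q_b)$ says that $\phi$ is generically finite-to-one onto its image, so $S=\phi(Y)$ is a well-defined irreducible compact complex-analytic subset of $\mathcal{B}(X)$ (taking $S$ to be the projection to $\mathcal{B}(X)$ of the compact analytic graph of $\phi$ in $Y\times\mathcal{B}(X)$). Stationarity of $q_b$ moreover forces the generic fibre $G_b = |Z_{\phi(b)}|$ to be irreducible of multiplicity $1$, so $S\cap\mathcal{B}^*(X)\neq\emptyset$. The general fibre $G_b$ thus coincides with the support $|Z_{\phi(b)}|$.

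Next I would apply Lemma~\ref{justify} to the stationary type $q(x,y)$: clause~(iii) of Definition~\ref{ample} is exactly its hypothesis. For any $a'\models p$ independent from $a$ over $\emptyset$ the Lemma yields $a=a_0,\dots,a_\ell=a'$ and $b=b_0,\dots,b_\ell$ with $(a_i,b_i),(a_{i+1},b_i)\models q$ for each $i$. Hence $a_i$ and $a_{i+1}$ both lie in $G_{b_i}\subseteq |Z_{\phi(b_i)}|$, and since each $b_i$ realises $s=\tp(b)$ it is generic in $Y$ and so lies in the domain of $\phi$; setting $s_{i+1}:=\phi(b_i)\in S$ produces a Campana-style chain connecting $a$ to $a'$. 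Inspection of the $U$-rank descent in the proof of Lemma~\ref{justify} bounds $\ell$ uniformly by $\ell_0:=U(a,b)$.

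Finally I would promote from generic to all pairs. Let $R\subseteq X\times X$ be the set of pairs connected by a chain of exactly $\ell_0$ cycles from $S$; this is the image of the closed analytic incidence variety
\[
\{(x,x',x_1,\dots,x_{\ell_0-1},s_1,\dots,s_{\ell_0}) : x,x_1\in|Z_{s_1}|,\ x_i,x_{i+1}\in|Z_{s_{i+1}}|,\ x_{\ell_0-1},x'\in|Z_{s_{\ell_0}}|\}
\]
in $X\times X\times X^{\ell_0-1}\times S^{\ell_0}$ under the projection to $X\times X$. Compactness of $X$ and $S$ makes the projection proper, so Remmert's theorem ensures $R$ is analytic. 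Chains of length less than $\ell_0$ may be padded to length $\ell_0$ by repeating the endpoint, which is legitimate because the analytic set $\{x\in X:\exists s\in S,\ x\in|Z_s|\}$ contains the generic point of the irreducible $X$ and hence equals $X$. By the previous paragraph $(a,a')\in R$, and since $(a,a')$ is the generic point of the irreducible space $X\times X$ we conclude $R=X\times X$. In particular any two standard points of $X$ are connected by a chain of cycles from $S$, which is the Campana-generating property.

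The main obstacle is the analyticity of $R$, which needs both ingredients: the uniform bound on $\ell$ (so that $R$ is described by finitely many existential quantifiers, not a countable union of analytic pieces) and the proper mapping theorem (so that projecting the incidence variety keeps us within analytic subsets). A secondary technical point is that every standard point of $X$ lies on some cycle in $S$, required for the padding step.
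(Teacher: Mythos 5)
Your proof is correct and follows essentially the same strategy as the paper's: obtain a chain connecting a generic independent pair $(a,a')$ from Lemma~\ref{justify}, observe that the ``chain-connectability'' relation on $X\times X$ is closed analytic (the paper cites section~1 of~\cite{campana81} where you invoke Remmert's proper mapping theorem directly), and conclude by irreducibility of $X\times X$. The one place you do extra work is in fixing a uniform bound $\ell_0$ in advance and padding shorter chains; this is not needed, since the paper simply takes the $\ell$ produced by one application of Lemma~\ref{justify} (which is automatically uniform over all $a'$ independent of $a$, by the stationarity argument in that lemma's proof) and works with $R^\ell$ alone.
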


\begin{proof}
Note that $\phi$ is generically finite-to-one onto $S$ and that for general $y\in Y$, $G_y=Z_{\phi(y)}$.
As in section~1 of~\cite{campana81}, for each $n\in\mathbb{N}$, let $R^n\subseteq X\times X$ be the set of pairs $(x,x')$ for which there exist $x=x_0,\dots,x_n=x'\in X$ and $s_1,\dots,s_n\in S$ such that 
$x,x_1\in |Z_{s_1}|$, $x_1,x_2\in |Z_{s_2}|$, $\dots$, $x_{n-1},x'\in |Z_{s_n}|$.
By construction $R^n$ is a complex-analytic subset of $X\times X$.
To show that $(Z_s:s\in S)$ is a generating family of cycles for $X$ we need to show that $R^\ell=X\times X$ for some $\ell\in\mathbb{N}$.
Now, since $q$ is a generating family for $p$, Lemma~\ref{justify} implies there exists $a'\models p$ independent from $a$ and $\ell\in\mathbb{N}$ for which there exist $a=a_0,\dots,a_\ell=a'$ and $b=b_0,\dots,b_\ell$ such that
$a_{i+1}\models\stp(a_i/Ab_i)$ and
$b_{i+1}\models\stp(b_i/Aa_{i+1})$,
for all $i=0,\dots,\ell-1$.
In particular the $b_i$'s are all generic in $Y$ and hence $\phi$ is defined on them, and $a_i,a_{i+1}\in G_{b_i}=Z_{\phi(b_i)}$.
That is, $(a,a')\in R^\ell$.
Since $a$ and $a'$ are generic independents of $X$, and $X$ is irreducible, it follows that $R^\ell=X\times X$.
\end{proof}
Campana proves the following algebraicity criterion (cf. Th\'eor\`eme 3 of~\cite{campana81}):
\begin{theorem}[Campana~\cite{campana81}]
\label{campanacriterion}
Suppose $X$ is a K\"ahler-type complex-analytic space.
If $(Z_s:s\in S)$ is a generating family of cycles for $X$, and each $Z_s$ is Moishezon, then $X$ is Moishezon.\qed
\end{theorem}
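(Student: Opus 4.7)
The plan is to prove that the algebraic reduction of $X$ is already bimeromorphic, so that $X$ inherits the Moishezon property from its target.  Let $\pi:X\to X_{\text{alg}}$ denote the algebraic reduction (understood as a meromorphic map): $X_{\text{alg}}$ is then Moishezon with $\dim X_{\text{alg}}=a(X)$, the algebraic dimension of $X$, and by construction the general fibre $F$ of $\pi$ has algebraic dimension zero.  It therefore suffices to show $\dim F=0$.

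The first step is to observe that for each $s\in S$ the restriction $\pi|_{|Z_s|}$ is generically finite onto its image.  Indeed, its general fibre is a complex-analytic subspace of a general fibre of $\pi$, so has algebraic dimension zero; but as a subspace of the Moishezon space $|Z_s|$ it is itself Moishezon, and so its algebraic dimension equals its dimension.  These fibres must therefore be zero-dimensional.

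Next, consider the subvarieties $R^n\subseteq X\times X$ introduced in the proof of Proposition~\ref{mtgen-campanagen}, which parametrise pairs of points connectible by a length-$n$ chain in the family, and intersect with the fibre product to form $R^n_\pi:=R^n\cap(X\times_{X_{\text{alg}}}X)$.  An induction on $n$, propagating generic finiteness of $\pi|_{|Z_s|}$ through concatenation of chains, shows that the first projection $R^n_\pi\to X$ has generically finite fibres for every $n$.  By the generating-family hypothesis we have $R^\ell=X\times X$ for some $\ell$, and hence $R^\ell_\pi=X\times_{X_{\text{alg}}}X$, which forces the projection $X\times_{X_{\text{alg}}}X\to X$ to be generically finite.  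This gives $\dim F=0$, as required, whence $\pi$ is bimeromorphic and $X$ is Moishezon.

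The main obstacle will be making the inductive step fully rigorous: one must show that the $R^n_\pi$ are genuinely cut out by iterating the incidence relation, and that generic finiteness is preserved under concatenation of chains, which requires some care near the indeterminacy locus of $\pi$ and along non-generic cycles whose image in $X_{\text{alg}}$ drops dimension.  The K\"ahler-type hypothesis is doing essential work here: it is what guarantees that the component of $\mathcal{B}(X)$ containing $S$ is compact, so that $S$ itself is a well-behaved compact parameter space and each $R^n$ is a genuine complex-analytic subvariety of $X\times X$ rather than merely a constructible set.
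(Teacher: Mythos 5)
This theorem is quoted by the paper as a black box (Th\'eor\`eme 3 of~\cite{campana81}, stated with \qed), so there is no in-paper proof to compare against; I am judging your argument on its own terms, and it has a genuine gap at its first step. You assert that the general fibre $F$ of the algebraic reduction $\pi:X\to X_{\text{alg}}$ has algebraic dimension zero, and that a complex-analytic subspace of such a fibre again has algebraic dimension zero. Both claims are false. For the first: a K\"ahler elliptic surface $X$ with $a(X)=1$ has algebraic reduction equal to the elliptic fibration $\pi:X\to C$, whose general fibre is an elliptic curve, of algebraic dimension $1$. (What is true is only that meromorphic functions on $X$ are constant on general fibres; the fibres may carry meromorphic functions of their own.) For the second: algebraic dimension is not monotone under inclusion --- every compact curve is projective, so any positive-dimensional fibre containing a curve already violates it. Consequently your Step 1 fails outright: in the elliptic surface example, a smooth fibre $Z=E_{t_0}$ is a Moishezon (indeed projective) cycle on which $\pi$ is constant, hence not generically finite onto its image. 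Step 1 is a claim about each individual cycle and makes no use of the generating hypothesis, so this really is a counterexample to it; more generally, even for a generating family a cycle may be ``partially vertical'' (e.g.\ a Moishezon surface in a threefold with $a(X)=1$ fibred in elliptic curves over $X_{\text{alg}}$), and then your induction on chains does not preserve generic finiteness.

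The difficulty you have assumed away is precisely the content of the theorem: one must show that a \emph{generating} family of Moishezon cycles on a K\"ahler-type space cannot move points a positive-dimensional amount inside the fibres of the algebraic reduction. Campana's actual route is different: he first constructs (using compactness of the relevant cycle spaces, which is where the K\"ahler-type hypothesis enters --- not, as you suggest, for the compactness of $S$ itself, which is already part of the definition of a family of cycles) the meromorphic quotient of $X$ by the equivalence relation generated by the family, together with the algebraic coreduction, and then combines this with Theorem~\ref{campanacbp} (the projection $Z_S\to X$ is a Moishezon morphism) to conclude. Your closing observation that the $R^n$ are analytic subvarieties of $X\times X$ and that $R^\ell=X\times X$ for some $\ell$ is correct and is indeed used in the paper's Proposition~\ref{mtgen-campanagen}, but by itself it does not interact with the algebraic reduction in the way your argument needs.
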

Using Proposition~\ref{mtgen-campanagen} it is not hard to see that our main result (Theorem~\ref{ample-by-moishezon=moishezon}) specialised to compact complex manifolds is a consequence of the above algebraicity criterion of Campana.
In fact, Theorem~\ref{ample-by-moishezon=moishezon} can be viewed as generalising the ``generic'' content of Theorem~\ref{campanacriterion} to all finite rank theories with the CBP.

\bigskip
\section{Variants of the CBP}
\label{cbpconsequences}

Zo\'e Chatzidakis~\cite{chatzidakis06} has shown that the CBP implies what on the face of it appears to be a 
stronger property.
We will use this in our proof of Theorem~\ref{ample-by-moishezon=moishezon} in Section~\ref{criteria} below.

\begin{theorem}[Chatzidakis~\cite{chatzidakis06}]
\label{strongcbp}
Suppose the CBP holds for $T$.
If $b=\cb(a/b)$ then $\stp\big(b/\acl(a)\cap\acl(b)\big)$ is almost $\PP$-internal.\footnote{Recall that in this general setting $\mathbb{P}$ denotes the collection of all nonmodular minimal types.}\qed
\end{theorem}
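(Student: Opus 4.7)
Let $c := \acl(a) \cap \acl(b)$.  The plan is to iterate the CBP twice, and then to descend the base from $\acl(a)$ down to $c$ via a Morley sequence in $\stp(a/b)$.

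First I would apply the CBP directly to the hypothesis: $\stp(b/a)$ is almost $\PP$-internal.  Setting $f := \cb(b/a) \in \acl(a)$, the tuple $b$ is independent from $a$ over $f$, and since almost $\PP$-internality is inherited by a nonforking restriction of the base, $\stp(b/f)$ is almost $\PP$-internal.  Moreover $f = \cb(b/f)$ (because the nonforking extension of $\stp(b/f)$ to $\stp(b/af)$ has canonical base $f$), so the CBP applies a second time---now with the roles of the pair reversed---to yield that $\stp(f/b)$ is also almost $\PP$-internal.  Thus we have almost $\PP$-internality in both directions between $b$ and $f$.

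To move the base from $f$---which lives in $\acl(a)$ but a priori not in $\acl(b)$---down to $c$, I would take a Morley sequence $(a_i)_{i<\omega}$ in $\stp(a/b)$.  Since $b = \cb(a/b)$, standard canonical-base theory gives $b \in \acl(a_0, \dots, a_m)$ for some $m$.  The tuples $f_i := \cb(b/a_i)$ are pairwise $b$-conjugates of $f$, and by the CBP each $\stp(f_i/b)$ is almost $\PP$-internal; moreover $\acl(f_i) \cap \acl(b)$ is a $b$-conjugate of $\acl(f) \cap \acl(b)$, and the latter is contained in $c$.  A nonforking amalgamation argument should then let one collect the $f_i$'s into a tuple that is almost $\PP$-internal over $c$ itself.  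Combining this with the almost $\PP$-internality of $\stp(b/f_i)$ from the previous paragraph, one concludes that $b$ lies in the algebraic closure of $c$ together with suitable $\PP$-realisations, which is exactly almost $\PP$-internality of $\stp(b/c)$.

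The main obstacle is this final assembly step: one must orchestrate the Morley sequence and the witnessing $\PP$-realisations coherently over $c$, rather than over $b$ or over the varying $a_i$'s.  This requires delicate control over how $c$ captures the ``common information'' shared by $b$ and the Morley-sequence conjugates of $a$, and is precisely where one expects to have to invoke the full canonical-base combinatorics developed by Chatzidakis in \cite{chatzidakis06}.
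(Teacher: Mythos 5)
A caveat first: the paper does not actually prove Theorem~\ref{strongcbp}. It is stated with attribution to Chatzidakis~\cite{chatzidakis06} and no proof is given; indeed the text explicitly says the proof ``involves some substantial model-theoretic technicalities.'' What the paper \emph{does} prove is Proposition~\ref{ucbp-zoe}, the same conclusion under the strictly stronger UCBP, by a rather different route: reduce to $\acl(a)\cap\acl(b)=\acl(\emptyset)$ and $a=\cb(b/a)$, build the alternating zigzag sequences $(a_i),(b_i)$ from the proof of Lemma~\ref{justify}, use UCBP at each zigzag step to keep $\stp(a_ib_i/a)$ almost $\PP$-internal, and apply the rank observation ($*$) to terminate with some $a_\ell b_\ell$ independent of $a$, whence $\stp(b)$ is almost $\PP$-internal. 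So there is no in-paper proof of Theorem~\ref{strongcbp} itself against which to check your attempt.

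As for the attempt: your preliminary moves are sound. Indeed $f=\cb(b/a)$ satisfies $f=\cb(b/f)$, so both $\stp(b/f)$ and $\stp(f/b)$ are almost $\PP$-internal, and the Morley-sequence setup with $b\in\acl(a_0,\dots,a_m)$ and the conjugates $f_i=\cb(b/a_i)$ is correct. But the final ``assembly'' step is a genuine gap, not a technicality to be filled in. Two separate obstructions hide inside the sentence ``a nonforking amalgamation argument should then let one collect the $f_i$'s into a tuple that is almost $\PP$-internal over $c$ itself.'' First, you know $\stp(f_i/b)$ is almost $\PP$-internal, but what you need is internality over $c$, and lowering the base of that internality from $b$ to $c$ is a base-descent problem of exactly the same character and difficulty as the theorem being proved; nothing in the sketch indicates how to do it. Second, even granting almost $\PP$-internality of the relevant tuple of $f_i$'s over $c$, combining that with the almost $\PP$-internality of $\stp(b/f_i)$ yields only two-step $\PP$-analysability of $\stp(b/c)$, not almost internality; collapsing such an analysis into a single internal step is precisely what CBP does not hand you directly, and is where all the work in Chatzidakis's argument lies. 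Your closing remark that one ``expects to have to invoke the full canonical-base combinatorics developed by Chatzidakis'' is therefore accurate, but it amounts to an acknowledgement that the sketch restates the problem rather than solving it.
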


The point here is that we are able to conclude that $\stp\big(b/\acl(a)\cap\acl(b)\big)$ rather than just $\stp(b/a)$ is almost $\mathbb{P}$-internal.
The following immediate corollary is our main use of this theorem.

\begin{corollary}
\label{corstrongcbp}
Suppose the CBP holds for $T$ and $q(x,y)$ is a canonical generating family for $p(x)$ over $s(y)$.
Then $s(y)$ is almost $\mathbb{P}$-internal.
\end{corollary}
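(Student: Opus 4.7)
The plan is to apply Chatzidakis's theorem (Theorem~\ref{strongcbp}) directly, once the setup is arranged so that the hypotheses match. First, I would pick a realisation $(a,b)\models q$, so that $b\models s$, $a\models p$, and the defining conditions of a canonical generating family apply: namely $b\in\cb(a/Ab)$ (canonicity) and $\acl(Aa)\cap\acl(Ab)=\acl(A)$ (condition~(iii)).

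To put ourselves in the situation of Theorem~\ref{strongcbp} I would name the parameters $A$ as constants. By Remark~1.2(a) the CBP is preserved, so it continues to hold in the expanded language. In the expanded language, canonicity becomes $b\in\cb(a/b)$, and since the canonical base is always contained in $\acl(b)$, we get $\acl(b)=\acl(\cb(a/b))$; that is, $b$ is interalgebraic with $\cb(a/b)$, so the hypothesis ``$b=\cb(a/b)$'' of Theorem~\ref{strongcbp} is satisfied (working, as the paper does, up to interalgebraicity).

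Theorem~\ref{strongcbp} then yields that $\stp\bigl(b/\acl(a)\cap\acl(b)\bigr)$ is almost $\mathbb{P}$-internal in the expanded language. Now I would invoke condition~(iii) of the generating family, which, after naming $A$, says exactly $\acl(a)\cap\acl(b)=\acl(\emptyset)$. Therefore $\stp(b)$ (still in the expanded language) is almost $\mathbb{P}$-internal, which translates back to $\stp(b/A)$ being almost $\mathbb{P}$-internal in the original language. Since $s=\tp(b/A)$ is stationary, this is the same as saying $s$ is almost $\mathbb{P}$-internal, as required.

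The proof is essentially a bookkeeping exercise: the real content lies in Theorem~\ref{strongcbp}, and the only thing to verify carefully is that the hypothesis ``$b=\cb(a/b)$'' is met once one names $A$ as constants. The small subtlety — that canonicity of the family together with the standard inclusion $\cb(a/b)\subseteq\acl(b)$ gives interalgebraicity, not equality — is the only place where one must be mildly attentive, and it is exactly the abuse of notation permitted by the convention on canonical bases stated at the start of the introduction.
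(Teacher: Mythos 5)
Your proposal is correct and follows essentially the same approach as the paper's proof: both reduce to Theorem~\ref{strongcbp} after naming $A$ as constants and identifying $b$ (up to interalgebraicity) with $\cb(a/Ab)$, then use condition~(iii) of the generating family to collapse the base $\acl(a)\cap\acl(b)$ to $\acl(\emptyset)$. The paper merely phrases this by explicitly introducing $b'=\cb(a/Ab)$ and transferring the conclusion back to $b$, rather than invoking the interalgebraicity convention up front as you do.
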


\begin{proof}
Writing $q(x,y)=\tp(a,b/A)$ we have that $\acl(Aa)\cap\acl(Ab)=\acl(A)$ and $b\in\cb(a/Ab)=:b'$.
Then $\acl(a)\cap\acl(b')\subseteq\acl(A)$ and $b'=\cb(a/b')$.
Hence by Theorem~\ref{strongcbp}, $\stp(b'/A)$ is almost $\mathbb{P}$-internal.
So $\tp(b/A)$ is almost $\mathbb{P}$-internal.
\end{proof}

The proof of Theorem~\ref{strongcbp} involves some substantial model-theoretic technicalities.
It turns out however that in the case of compact complex manifolds the conclusion follows from a very transparent geometric argument.
In this section, generalising from the case of compact complex manifolds, we introduce a ``uniform'' version of the CBP and give a rather conceptual argument for why the conclusion of Theorem~\ref{strongcbp} follows from this strengthened form of the CBP.

To motivate this ``uniform'' CBP, let us look back to the previous section at how the CBP was established for $\th(\mathcal{A})$, and notice that one actually gets more.
Suppose $\tp(a/b)$ is a stationary type, $X=\locus(a)$, $Y=\locus(b)$, and $G=\locus(a,b)\subseteq X\times Y$ and $\pi_X:G\to X$ is the natural projection.
Campana's theorem (Theorem~\ref{campanacbp}) implies that if $b=\cb(p)$ then $\pi_X$ is a Moishezon  morphism, {\em without taking any base change}.
On the other hand, the CBP only concludes that $\tp(b/a)$ is internal to the projective line, which means that $\pi_X$ is Moishezon after a suitable base change (cf. the equivalence of (c) and (d) in Fact~\ref{algebraicity}).
To see concretely how these conclusions differ, consider what happens if there is another Moishezon morphism $f:X\to X'$.
The Moishezonness of $\pi_X:G\to X$ would imply that $f\circ\pi_X$ is Moishezon, and so $\stp\big(b/f(a)\big)$ would be $\mathbb{P}(\mathbb{C})$-internal.
Whereas internality of $\stp(b/a)$ to the projective line only implies that $\stp\big(b/f(a)\big)$ is $\mathbb{P}(\mathbb{C})$-{\em analysable} (in two steps).
So Campana's theorem tells us more than the CBP for $\th(\mathcal{A})$.

This motivates the following definition; we return to our general set-up where $T$ is an arbitrary complete stable theory of finite rank and $\mathbb{P}$ is the collection of all nonmodular  minimal types.

\medskip
\noindent
{\bf Uniform Canonical Base Property (UCBP).}
{\em Suppose $b=\cb(a/b)$ and $C$ is a set of parameters such that $\stp(a/C)$ is almost $\mathbb{P}$-internal.
Then for $b'$ realising the nonforking extension of $\stp(b/a)$ to $\acl(Ca)$, $\stp(ab'/C)$ is almost $\mathbb{P}$-internal.}
\medskip

Note that UCBP implies CBP by taking $C=a$.

\begin{proposition}
$\th(\mathcal{A})$ has the UCBP.
\end{proposition}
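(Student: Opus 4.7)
The plan is to translate the UCBP into a statement about Moishezon morphisms and then verify it by combining Campana's Barlet-space theorem (Theorem~\ref{campanacbp}) with the stability of Moishezon morphisms under base change and composition. By standard preservation of almost $\mathbb{P}$-internality under nonforking extensions, together with restriction to the canonical base of $\stp(a/C)$, I may assume that $C$ is a finite tuple $c$. Set $X := \locus(a)$, $Y := \locus(b)$, $T := \locus(c)$, $G := \locus(a,b)\subseteq X\times Y$, and $H := \locus(a,c)\subseteq X\times T$. By Fact~\ref{algebraicity}, the task is then to show that the projection $Z \to T$ is Moishezon after some base change, where $Z := \locus(a,b',c) \subseteq X\times Y\times T$.

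Two geometric inputs feed the argument. First, because $b=\cb(a/b)$, the very same reasoning that establishes the CBP for $\th(\mathcal{A})$ in Section~\ref{ccm} -- namely Campana's Theorem~\ref{campanacbp} applied to the Barlet-space image of $Y$ -- shows that the projection $G\to X$ is a Moishezon morphism \emph{without any base change}. Second, the hypothesis that $\stp(a/c)$ is almost $\mathbb{P}$-internal translates, via Fact~\ref{algebraicity}, into the existence of some $T' \to T$ making the pullback $H\times_T T' \to T'$ Moishezon.

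The crucial geometric step is to identify $Z$, up to bimeromorphism, with the fibred product $G\times_X H$. Here I use $b'\equiv_a b$, which yields $\locus(a,b') = G$, together with $b'\downarrow_a c$, which ensures that the generic fibre of $Z$ over a point of $H$ coincides with the generic fibre of $G$ over the underlying point of $X$. Once this identification is in place, $Z\to H$ is the base change along $H\to X$ of the unconditionally Moishezon $G\to X$, hence is itself Moishezon. Base-changing further along $T'\to T$, the morphism $Z\times_T T' \to H\times_T T'$ remains Moishezon and composes with the Moishezon $H\times_T T' \to T'$ to give a Moishezon $Z\times_T T' \to T'$. A final appeal to Fact~\ref{algebraicity} gives that $\stp(ab'/c)$ is almost $\mathbb{P}$-internal, which is what we need.

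The delicate step will be the fibred-product identification of $Z$ with $G\times_X H$: this is precisely where the nonforking independence $b'\downarrow_a c$ is essential, and it is what allows the model-theoretic composition to be realised as a genuine composition of Moishezon morphisms. It should be noted that the whole strategy depends on Campana's theorem producing Moishezonness of $G\to X$ with no base change at all; it is this unconditional form that survives the further base change along $H\to X$ and can be composed cleanly with the base-change Moishezonness supplied by the hypothesis. This is exactly the extra strength over bare CBP that the UCBP is meant to abstract.
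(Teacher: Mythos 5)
Your argument is correct and follows the paper's own proof essentially verbatim, modulo notation: reduce to $c=\cb(a/C)$, observe that $\locus(a,b')\to\locus(a)$ is unconditionally Moishezon by the Campana/Barlet argument, recognise $\locus(a,b',c)\to\locus(a,c)$ as a base change of it (using $b'\downarrow_a c$), and compose with the base-changed Moishezon morphism provided by $\mathbb{P}(\mathbb{C})$-internality of $\stp(a/c)$. The only cosmetic difference is that you spell out the fibred-product identification more explicitly than the paper, which simply asserts the base-change fact in a parenthetical.
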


\begin{proof}
This is more or less the preceding discussion, but we give some details now.
Suppose $p=\tp(a/b)$ is stationary in $\th(\mathcal{A})$ with $b=\cb(p)$.
We have seen that the projection $\locus(a,b)\to\locus(a)$ is a Moishezon morphism.
Now suppose $C$ is a set of parameters such that $\stp(a/C)$ is $\mathbb{P}(\mathbb{C})$-internal and let $b'$ realise the nonforking extension of $\stp(b/a)$ to $\acl(Ca)$.
Set $c=\cb(a/C)$, $X=\locus(a,b',c)$, $Y=\locus(a,c)$ and $Z=\locus(c)$.
So $X\to Y$ is a Moishezon morphism as it is obtained from $\locus(a,b)\to\locus(a)$ by base change (this is where the choice of $b'$ independent of $c$ over $a$ is used).
We need to show that, possibly after further base change, $X\to Z$ is Moishezon.
Since $\stp(a/c)$ is $\mathbb{P}(\mathbb{C})$-internal, we have by Fact~\ref{algebraicity} that for some $T\to Z$, $Y\times_ZT\to T$ is Moishezon.
On the other hand, $X\times_ZT\to Y\times_ZT$ is Moishezon since it is obtained from $X\to Y$ by base change.
Hence the composition $X\times_ZT\to T$ is Moishezon, as desired.
\end{proof}

The following argument yields the conclusion of Chatzidakis' theorem under the stronger hypothesis of UCBP.

\begin{proposition}
\label{ucbp-zoe}
Suppose the UCBP holds for $T$.
If $b=\cb(a/b)$ then $\stp\big(b/\acl(a)\cap\acl(b)\big)$ is almost $\PP$-internal.
\end{proposition}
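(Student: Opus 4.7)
Set $c = \acl(a)\cap\acl(b)$; the goal is to show that $\stp(b/c)$ is almost $\mathbb{P}$-internal.

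The main plan is to apply the UCBP to the pair $(a,b)$ with parameter set $C := c$. Observe that $c\subseteq\acl(a)$ forces $\acl(ca)=\acl(a)$, so the element $b'$ appearing in the conclusion of the UCBP — the realisation of the non-forking extension of $\stp(b/a)$ to $\acl(ca)$ — coincides with $b$ itself by stationarity. Thus, provided the hypothesis of the UCBP is met, namely that $\stp(a/c)$ is almost $\mathbb{P}$-internal, the UCBP will deliver that $\stp(ab/c)$, and in particular $\stp(b/c)$, is almost $\mathbb{P}$-internal, as required.

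The entire difficulty, then, is to verify that $\stp(a/c)$ is almost $\mathbb{P}$-internal. I would try to establish this via a Morley-sequence argument combined with a second use of the UCBP. Take $(a_i)_{i<n}$ to be a Morley sequence in $\stp(a/b)$ over $\acl(b)$, long enough that $b\in\acl(a_0,\dots,a_{n-1})$ — such $n$ exists because $b=\cb(a/b)$. Writing $\bar a=(a_0,\dots,a_{n-1})$, we retain $b=\cb(\bar a/b)$. The conjugate sets $c_i := \acl(a_i)\cap\acl(b)$ are all $b$-conjugate to $c=c_0$ and all sit inside $\acl(b)$, yet they are rigidly linked to the algebraic dependence of $b$ on $\bar a$. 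The plan is to use the CBP applied to each $\stp(b/a_i)=\stp(b/a)$ together with the uniformity of UCBP — its ability to relativise $\mathbb{P}$-internality of $\stp(b/a)$ to any parameter set over which $a$ itself is already $\mathbb{P}$-internal — to descend through this configuration and extract the almost $\mathbb{P}$-internality of $\stp(a/c)$.

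The hard part is exactly this intermediate step: producing almost $\mathbb{P}$-internality of $\stp(a/c)$ from the CBP-internality of $\stp(b/a)$ via the uniform strengthening in UCBP; by contrast, once $\stp(a/c)$ is known to be almost $\mathbb{P}$-internal, the final invocation of UCBP at $C=c$ closes the argument immediately.
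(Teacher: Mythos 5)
There is a genuine gap, and it sits exactly where you flagged it. Your first step is logically fine: since $c=\acl(a)\cap\acl(b)\subseteq\acl(a)$, one has $\acl(ca)=\acl(a)$, so $b'=b$, and \emph{if} $\stp(a/c)$ were almost $\PP$-internal, UCBP with $C=c$ would give $\stp(ab/c)$ almost $\PP$-internal and you would be done. The problem is that $\stp(a/c)$ is simply not almost $\PP$-internal in general. For instance, if $a=(a_1,e)$ where $e$ is independent from everything else and realises a modular minimal type orthogonal to $\PP$, then $b=\cb(a/b)=\cb(a_1/b)$ and $c=\acl(a_1)\cap\acl(b)$ do not see $e$ at all, so $\stp(a/c)$ retains the non-$\PP$-internal component $e$. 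To kill such junk one must first replace $a$ by $a^*:=\cb(b/a)$; but then proving that $\stp(a^*/c)$ is almost $\PP$-internal is, by the evident symmetry of the reduced configuration, the very statement you are trying to prove with the roles of $a^*$ and $b$ swapped. The Morley-sequence sketch in your second paragraph does not escape this circle: CBP gives you almost $\PP$-internality of $\stp(b/a_i)$ \emph{over} $a_i$, not of $\stp(a_i/c_i)$ over $c_i$, and no mechanism is offered to descend the base from $a_i$ to $c_i$.

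The paper's proof avoids the issue by never attempting to work directly over $c$. After naming $c$ (so WLOG $\acl(a)\cap\acl(b)=\acl(\emptyset)$) and reducing to the symmetric case $a=\cb(b/a)$, it applies UCBP repeatedly with the \emph{fixed} parameter set $C=a$ to build sequences $a=a_0,a_1,\dots$ and $b=b_0,b_1,\dots$ satisfying the inductive conditions (1), (2) of Lemma~\ref{justify} while maintaining that $\stp(a_ib_i/a)$ is almost $\PP$-internal. This is always legitimate because $\stp(a_i/a)$ is almost $\PP$-internal by the previous step (and $\stp(a/a)$ trivially so at the start). Then the Lascar-style statement $(\ast)$ isolated in the proof of Lemma~\ref{justify} produces $\ell$ with $a$ independent from $a_\ell b_\ell$ over $\emptyset$; since $\stp(b_\ell/a)$ is almost $\PP$-internal and is a nonforking extension of $\stp(b_\ell)=\stp(b)$, the conclusion follows. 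The ping-pong between the two reductions and the use of $(\ast)$ is the essential ingredient, and it is precisely what is missing from your argument.
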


\begin{proof}
Working over $\acl(a)\cap\acl(b)$ we may assume that $\acl(a)\cap\acl(b)=\acl(\emptyset)$, and aim to show that $\stp(b)$ is almost $\mathbb{P}$-internal.

Next, we may assume that $a=\cb(b/a)$ also.
Indeed, letting $a'=\cb(b/a)$ and $b'=\cb(a'/b)$ we see that $b$ and $a$ are independent over $b'$.
This implies that $\acl(b)=\acl(b')$.
Since $a'=\cb(b'/a')$, $b'=\cb(a'/b')$, and $\acl(a')\cap\acl(b')=\acl(\emptyset)$, we have obtained the desired reduction.

Now define inductively sequences $a=a_0,a_1,\dots$ and $b=b_0,b_1,\dots $ satisfying the following
\begin{itemize}
\item
$a_{i+1}$ realises the nonforking extension of $\tp(a_i/b_i)$ to $ab_i$
\item
$b_{i+1}$ realises the nonforking extension of $\stp(b_i/a_{i+1})$ to $aa_{i+1}$
\item
$\stp(a_ib_i/a)$ is almost $\mathbb{P}$-internal.
\end{itemize}
This is done as follows.
Applying UCBP to $(a,b,C=a)$ we get that $\stp(b/a)$ is almost $\mathbb{P}$-internal.
Now, appying UCBP to $(b,a,C=a)$ we obtain a realisation $a_1$ of the nonforking extension of $\tp(a/b)$ to $ab$ such that $\stp(a_1b/a)$ is almost $\mathbb{P}$-internal.
Hence $\stp(a_1/a)$ is almost $\mathbb{P}$-internal.
Now apply UCBP to $(a_1,b,C=a)$ to obtain a realisation $b_1$ of the nonforking extension of $\stp(b/a_1)$ to $aa_1$ such that $\stp(a_1b_1/a)$ is almost $\PP$-internal.
Hence, $\stp(b_1/a)$ is almost $\PP$-internal, and we may continue.

By ($*$) in the proof of Lemma~\ref{justify} we know that eventually, for some $\ell$, $a$ is independent of $a_\ell b_\ell$ over $\emptyset$.
In particular, $\stp(b_\ell/a)$ -- which is almost $\mathbb{P}$-internal since $\stp(a_\ell b_\ell /a)$ is almost $\mathbb{P}$-internal -- is a nonforking extension of $\stp(b_\ell)$.
Hence $\stp(b_\ell)$ is almost $\PP$-internal, as desired.
\end{proof}

\begin{question}
Does the CBP imply the UCBP?
Equivalently, does the conclusion of Chatzidakis' Theorem~\ref{strongcbp}
imply the UCBP?
\end{question}

\bigskip

\section{Internality criteria}
\label{criteria}

We now work toward proving Theorem~\ref{ample-by-moishezon=moishezon}.
We will need at least Chatzidakis' Proposition~1.9 from~\cite{chatzidakis06}, which we now state in a slightly different form.
 
\begin{lemma}[Chatzidakis ~\cite{chatzidakis06}]
\label{chatzidakisprop}
Suppose that $a,b$ are tuples, $A$ is 
algebraically closed, $b = \cb(a/Ab)$, $a = \cb(b/Aa)$ and $\acl(Aa)\cap \acl(Ab) = A$.
Then there are mutually orthogonal nonmodular $U$-rank 
$1$-types 
$r_{1},..,r_{k}$, each nonorthogonal to $A$, and tuples $a_{1},..,a_{k},b_{1},..,b_{k}\notin A$ such that $\tp(a_{i}/A)$ and 
$\tp(b_{i}/A)$ are ${\bf R}_{i}$-analysable for each $i$ (where ${\bf R}_{i}$ is the set of $A$-conjugates of $r_{i}$), $a$ is interalgebraic with 
$(a_{1},..,a_{k})$ over $A$, $b$ is interalgebraic with $(b_{1},..,b_{k})$ over $A$, and for each $i$, $b_{i} = \cb(a_{i}/Ab)$ and $a_{i} = \cb(b_{i}/Aa)$.\qed
\end{lemma}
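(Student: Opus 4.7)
The strategy is to use the CBP symmetrically on both hypotheses and then decompose the resulting $\mathbb{P}$-analysable types according to non-orthogonality classes of minimal types, finally matching the canonical base data through the decomposition.

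\textbf{Step 1 (Symmetric application of CBP).} Apply the CBP after naming $A$ to $b=\cb(a/Ab)$ to conclude that $\stp(b/Aa)$ is almost $\mathbb{P}$-internal, and analogously to $a=\cb(b/Aa)$ to get that $\stp(a/Ab)$ is almost $\mathbb{P}$-internal.

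\textbf{Step 2 (Upgrade to analysability of $\stp(a/A)$ and $\stp(b/A)$).} Let $a''$ enumerate the maximal subextension of $A$ inside $\acl(Aa)$ whose type over $A$ is $\mathbb{P}$-analysable; then $\stp(a/Aa'')$ is orthogonal to $\mathbb{P}$. Combining this orthogonality with the almost $\mathbb{P}$-internality of $\stp(b/Aa)$, one argues that the realisations of $\mathbb{P}$-types witnessing internality can be taken independent of $a$ over $Aa''$, giving that $\stp(b/Aa'')$ is already almost $\mathbb{P}$-internal. Transitivity of analysability then shows $\stp(b/A)$ is almost $\mathbb{P}$-analysable, and symmetrically so is $\stp(a/A)$.

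\textbf{Step 3 (Orthogonality decomposition).} By finite $U$-rank only finitely many non-orthogonality classes of nonmodular minimal types appear in the analyses of $\stp(a/A)$ and $\stp(b/A)$; pick representatives $r_1,\dots,r_k$ each nonorthogonal to $A$, and let $\mathbf{R}_i$ be the family of $A$-conjugates of $r_i$. Let $a_i\in\acl(Aa)$ and $b_i\in\acl(Ab)$ be tuples generating the maximal $\mathbf{R}_i$-analysable subextensions of $A$. The mutual orthogonality of the $r_i$ makes these $\mathbf{R}_i$-parts mutually independent over $A$ and shows they jointly generate the whole $\mathbb{P}$-analysable closure; hence $a$ is interalgebraic with $(a_1,\dots,a_k)$ and $b$ with $(b_1,\dots,b_k)$ over $A$.

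\textbf{Step 4 (Canonical base matching and the intersection condition).} To see $b_i=\cb(a_i/Ab)$, use that by orthogonality of the $r_j$ for $j\ne i$, $a_i$ is independent from $(b_j)_{j\ne i}$ over $Ab_i$, forcing $\cb(a_i/Ab)\subseteq\acl(Ab_i)$; conversely, decomposing the equality $b=\cb(a/Ab)$ into its $\mathbf{R}_i$-components yields $b_i\subseteq\cb(a_i/Ab)$ up to interalgebraicity. The symmetric statement $a_i=\cb(b_i/Aa)$ follows the same way. Finally, discard any index with $a_i\in A$ or $b_i\in A$; the condition $\acl(Aa)\cap\acl(Ab)=A$ together with the $\cb$-relations ensures $a_i\in A$ iff $b_i\in A$, so this truncation is consistent.

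\textbf{Main obstacle.} The technical heart lies in Step 2, namely pushing the $\mathbb{P}$-internality of $\stp(b/Aa)$ down past the $\mathbb{P}$-orthogonal part of $\acl(Aa)$ to obtain $\mathbb{P}$-analysability of $\stp(b/A)$. This is the step where one genuinely exploits the interplay between the two canonical base equalities and the orthogonality properties of non-$\mathbb{P}$-analysable types, and it is the step whose correctness is least routine.
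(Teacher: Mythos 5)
The paper does not prove this lemma at all: it is quoted without proof as a restatement of Proposition~1.9 (and its consequences) from Chatzidakis's paper, and crucially it is an \emph{unconditional} theorem of finite-rank stability theory --- it is what makes part~(i) of Proposition~\ref{theoremrank1} and of Theorem~\ref{ample-by-moishezon=moishezon} free of the CBP hypothesis. Your Step~1 therefore contains a fatal error: you obtain the almost $\mathbb{P}$-internality of $\stp(b/Aa)$ and $\stp(a/Ab)$ by ``applying the CBP'', but the CBP is an open conjecture and is not among the hypotheses of the lemma. The genuine, unconditional input here is precisely Chatzidakis's theorem that $\stp\big(\cb(a/Ab)/Aa\big)$ is always almost $\mathbb{P}$-\emph{analysable}; its proof rests on the fact that types analysable in locally modular minimal types are one-based, so that the ``modular part'' of the canonical base has its own canonical bases inside $\acl(Aa)\cap\acl(Ab)=A$ and is killed by the intersection hypothesis. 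None of that appears in your sketch: you have replaced the hard content of the lemma with an appeal to the very conjecture whose consequences the paper is investigating, and in doing so you would make the unconditional half of the main theorem conditional.

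Even granting yourself that input, Step~2 --- which you correctly flag as the crux --- is not an argument. The implication ``$\stp(b/Aa)$ almost $\mathbb{P}$-internal and $\stp(a/Aa'')$ hereditarily orthogonal to $\mathbb{P}$ implies $\stp(b/Aa'')$ almost $\mathbb{P}$-internal'' is false without further hypotheses: take $b=a$ with $\tp(a/A)$ nonalgebraic and analysable only in modular minimal types; then $\stp(b/Aa)$ is algebraic (hence internal to anything), $a''\in A$, and $\stp(b/Aa'')=\stp(a/A)$ is not even $\mathbb{P}$-analysable. So the conditions $\acl(Aa)\cap\acl(Ab)=A$, $b=\cb(a/Ab)$ and $a=\cb(b/Aa)$ must enter essentially at exactly this point, and the phrase ``one argues that the realisations witnessing internality can be taken independent of $a$ over $Aa''$'' does not say how. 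Steps~3 and~4 (decomposition into mutually orthogonal $\mathbf{R}_i$-components and matching of the canonical bases) have the right shape and are comparatively routine once the analysability of $\tp(a/A)$ and $\tp(b/A)$ in nonmodular minimal types is secured; but as it stands the proposal proves at best a CBP-conditional statement with its central step missing.
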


We now prove Theorem~\ref{ample-by-moishezon=moishezon} in the special case when the fibres of the generating family are of rank $1$.

\begin{proposition}
\label{theoremrank1}
Suppose that a stationary type $p(x)=\tp(a/A)$ has a nontrivial canonical generating family $q(x,y)=\tp(a,b/A)$ over $s(y)=\tp(b/A)$, with $q_b=\tp(a/Ab)$ of rank $1$.
\begin{itemize}
\item[(i)]
Then $q_b$ is nonorthogonal to $A$ and $p$ is almost analysable in $q_b$.
\item[(ii)]
If moreover $T$ has the CBP, then $p$ is almost internal to $q_b$.
\end{itemize}
\end{proposition}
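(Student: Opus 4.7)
The plan is to apply Chatzidakis' decomposition (Lemma~\ref{chatzidakisprop}) after reducing to its symmetric hypotheses, exploit the rank-$1$ assumption on the fibre to force the Chatzidakis decomposition to collapse to a single minimal type $r$, and then use this $r$ (together with the CBP in part~(ii)) to identify $q_b$ up to nonorthogonality and transport the conclusion. Throughout I would replace $A$ by its algebraic closure.

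First I would symmetrize the canonical-base data by iteration. Starting from $(a_0,b_0)=(a,b)$, set $a_{i+1}:=\cb(b_i/Aa_i)$ and $b_{i+1}:=\cb(a_{i+1}/Ab_i)$. The sets $\acl(Aa_i),\acl(Ab_i)$ are descending, so finite rank forces stabilization at $(a^*,b^*)$ with $a^*=\cb(b^*/Aa^*)$, $b^*=\cb(a^*/Ab^*)$, $a^*\in\acl(Aa)$, $b^*\in\acl(Ab)$, and (inherited from condition~(iii) of Definition~\ref{ample}) $\acl(Aa^*)\cap\acl(Ab^*)=\acl(A)$. The rank-$1$ fibre condition makes $q_b$ a strictly forking extension of $p$, so $U(p)>1$ and one verifies that the iteration does not collapse into $\acl(A)$. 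A short rank computation then gives $U(a^*/Ab^*)=1$: the bound $\le 1$ comes from $a^*\in\acl(Aa)$ and $U(a/Ab)=1$; the bound $\ge 1$ from $a^*\notin\acl(Ab)$ (otherwise $a^*\in\acl(Aa)\cap\acl(Ab)=\acl(A)$); and $U(a^*/Ab)=U(a^*/Ab^*)$ because $b^*=\cb(a^*/Ab)$.

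Next I would apply Lemma~\ref{chatzidakisprop} to $(a^*,b^*)$, obtaining mutually orthogonal nonmodular rank-$1$ types $r_1,\ldots,r_k$, each nonorthogonal to $A$, and corresponding tuples $(a_j^*)_{j=1}^k, (b_j^*)_{j=1}^k$, with $a^*,b^*$ interalgebraic over $A$ with these tuples. Each $a_j^*$ lies outside $\acl(Ab^*)$ (otherwise $a_j^*\in\acl(Aa^*)\cap\acl(Ab^*)=\acl(A)$), so by Lascar additivity of $U$-rank, $U(a^*/Ab^*)\ge k$; combined with $U(a^*/Ab^*)=1$, this forces $k=1$. Write $r:=r_1$. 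Since $\tp(a^*/A)$ is $\mathbf{R}$-analysable and $\tp(a^*/Ab^*)$ is a nonalgebraic rank-$1$ extension of it, $\tp(a^*/Ab^*)\not\perp r$. The algebraic correspondence $a\mapsto a^*\in\acl(Aa)$ gives a generically finite-to-one, rank-preserving reduction from $q_b$ to $\tp(a^*/Ab^*)$, so $q_b\not\perp\tp(a^*/Ab^*)$; by transitivity of nonorthogonality among minimal types, $q_b\not\perp r$ and in particular $q_b\not\perp A$.

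For part~(i), Chatzidakis yields that $\tp(b^*/A)$ is $\mathbf{R}$-analysable, and propagating along the Morley-sequence identity $b\in\acl(A,a_1,\ldots,a_n)$ for a Morley sequence $(a_j)$ of $q_b$ over $Ab$ (each $a_j\models p$), together with ``$p$ is almost internal to $s$'' from Remark~\ref{genrem}(a), shows $s=\tp(b/A)$ is $\mathbf{R}$-analysable. Invoking $r\not\perp q_b$ reexpresses this as analysability in the family of $A$-conjugates of $q_b$, and composition with Remark~\ref{genrem}(a) gives $p$ almost analysable in $q_b$. For part~(ii), assume the CBP: Corollary~\ref{corstrongcbp} gives that $s$ is almost $\mathbb{P}$-internal; the $\mathbf{R}$-analysability just established means $s$ is orthogonal to every minimal type not in $\mathbf{R}$, so the $\mathbb{P}$-witnesses may be confined to $\mathbf{R}$, giving $s$ almost $\mathbf{R}$-internal and hence almost internal to the family of $A$-conjugates of $q_b$. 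Composing once more with ``$p$ almost internal to $s$'' then gives $p$ almost internal to $q_b$. The principal obstacle is the combinatorial bookkeeping around Chatzidakis' lemma --- especially forcing $k=1$ and identifying $q_b\not\perp r$ --- that converts the $\mathbf{R}$-statements from Chatzidakis and from the CBP into statements about $q_b$ itself.
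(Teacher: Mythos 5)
Your overall strategy matches the paper's: symmetrize the canonical base data, invoke Lemma~\ref{chatzidakisprop}, force $k=1$ by a rank count, and transfer the Chatzidakis decomposition back to $q_b$, finishing with Remark~\ref{genrem} and Corollary~\ref{corstrongcbp}. However, there is a substantive gap at the step where you conclude that $s=\tp(b/A)$ is $\mathbf{R}$-analysable. Chatzidakis' lemma gives you $R$-analysability of $\tp(b^*/A)$, where $b^*\in\acl(Ab)$; to lift this to $\tp(b/A)$ you need to know that $b$ and $b^*$ are interalgebraic over $A$, and nothing in your argument establishes this. The paper obtains this interalgebraicity as a key preliminary observation (cf.\ the reduction in the proof of Proposition~\ref{ucbp-zoe}: setting $a'=\cb(b/Aa)$ and $b'=\cb(a'/Ab)$, one shows $a\downarrow_{Ab'}b$, so $b=\cb(a/Ab)\in\acl(Ab')$ and hence $\acl(Ab)=\acl(Ab')$). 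Your proposed substitute --- ``propagating along the Morley-sequence identity $b\in\acl(A,a_1,\dots,a_n)$ together with $p^{\overline M}\subseteq\acl(As^{\overline M})$'' --- does not close this: the $a_j$ realize $p$, not a type already known to be $\mathbf{R}$-analysable, so the reasoning is circular, and the Remark~\ref{genrem}(a) inclusion only goes the wrong way. Since both parts~(i) and~(ii) rest on ``$s$ is $\mathbf{R}$-analysable'', this gap is not peripheral.

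A secondary issue: your claim $U(a^*/Ab^*)=1$ is not justified as written. You correctly get $U(a^*/Ab)\le 1$ (from $a^*\in\acl(Aa)$ and $U(a/Ab)=1$) and $a^*\notin\acl(Ab^*)$, but the equation $U(a^*/Ab)=U(a^*/Ab^*)$ requires $b^*=\cb(a^*/Ab)$, which your inductive construction only guarantees at the base $Ab^*$, not at $Ab$. (It does hold, but precisely because the iteration stabilizes after two steps --- which is the same interalgebraicity fact you have not proved.) This particular defect is easily repaired by running the rank count over $Ab$ rather than $Ab^*$: the $a_j^*$ lie in $\acl(Aa)$ and outside $\acl(Ab)$, are pairwise independent over $Ab$ by mutual orthogonality of the $R_i$, so $U(a^*/Ab)\ge k$, while $U(a^*/Ab)\le 1$; hence $k=1$. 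But the missing interalgebraicity of $b$ with $b^*$ (equivalently $b'$) over $A$ is the genuine gap and should be supplied; once it is, the rest of your argument goes through along the same lines as the paper's.
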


Note that if the fibre $q_b$ is almost internal to a nonmodular minimal type $r$ then we can replace $q_b$ by $r$ in the conclusions~(i) and~(ii) above.
Hence Proposition~\ref{theoremrank1} does indeed imply a special case of Theorem~\ref{ample-by-moishezon=moishezon}.

\begin{proof}[Proof of~\ref{theoremrank1}]
Without loss of generality asume that $A=\acl(A)$.
Also note that $b$ and $\cb(a/Ab)$ are interalgebraic over $A$, and replacing $b$ by $\cb(a/Ab)$ preserves the hypotheses of the proposition.
So we may assume that $b=\cb(a/Ab)$.

Let $a'=\cb(b/Aa)$ and $b'=\cb(a'/Ab)$.
Then, as observed in the proof of Proposition~\ref{ucbp-zoe}, $b$ and $b'$ are interalgebraic over $A$.

We claim that {\em $a$ and $a'$ are interalgebraic over $Ab$}.
Note first that $b$ forks with $a'$ over $A$. For if
not, then $b$ is independent from $a$
over $A$, and so $b\in A$,
contradicting nontriviality.
So $a'\notin A$.
If $a'\in \acl(Ab)$, then, as $a'\in \acl(Aa)$, we
contradict the hypothesis that
$\acl(Aa)\cap \acl(Ab) = A$.
So $U(a'/Ab)\geq 1$.
As $U(a/Ab) = 1$
and $a'\in \acl(Aa)$ it follows that 
$a$ is interalgebraic with $a'$ over $Ab$ as required.

We have $b'=\cb(a'/Ab')$, $a'\in\cb(b'/Aa')$ and $\acl(Aa')\cap\acl(Ab')=A$.
So we can apply Lemma~\ref{chatzidakisprop} to $a',b',A$ to obtain $r_1,\dots,r_k$ and $a_1',\dots,a_k',b_1',\dots,b_k'$ satisfying the conclusion of that lemma (with the obvious notational changes).

\begin{claim}
$k=1$ and $r_1$ is nonorthogonal to $q_b$.
\end{claim}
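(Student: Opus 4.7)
The plan is to establish $k=1$ via a rank calculation exploiting mutual orthogonality of the $\mathbf{R}_i$, and then to deduce $q_b\not\perp r_1$ by extracting an $\mathbf{R}_1$-internal piece inside $\acl(Aa_1')$ and transferring its internality from base $A$ to base $Ab$.

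For $k=1$: since each $a_i'\in\acl(Aa')\subseteq\acl(Aa)$, we have $U(a_i'/Ab)\leq U(a/Ab)=1$, while $a_i'\notin A$ together with $\acl(Aa)\cap\acl(Ab)=A$ forces $U(a_i'/Ab)=1$. Since $a$ is interalgebraic with $a'$ over $Ab$ and $a'$ is interalgebraic with $(a_1',\dots,a_k')$ over $A$ (hence over $Ab$), one also has $U(a_1',\dots,a_k'/Ab)=1$; Lascar's inequality then forces $a_j'\in\acl(Aba_i')$ for all $i,j$, so the $a_i'$ are pairwise interalgebraic over $Ab$. By standard properties of canonical bases, $b_i'=\cb(a_i'/Ab)$ and $b_j'=\cb(a_j'/Ab)$ are then interalgebraic over $A$. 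On the other hand, $\tp(b_i'/A)$ is $\mathbf{R}_i$-analysable and the families $\mathbf{R}_i,\mathbf{R}_j$ are orthogonal, so $\tp(b_i'/A)\perp\tp(b_j'/A)$ and $b_i'\downarrow_A b_j'$; combined with interalgebraicity over $A$, this places $b_j'\in A$ for $j\neq 1$, contradicting $b_j'\notin A$ unless $k=1$.

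With $k=1$: $a'$ is interalgebraic with $a_1'$ over $A$, hence $a$ is interalgebraic with $a_1'$ over $Ab$, so $q_b$ is interalgebraic with $\tp(a_1'/Ab)$ and it suffices to show $\tp(a_1'/Ab)\not\perp r_1$. Since $\tp(a_1'/A)$ is $\mathbf{R}_1$-analysable and non-algebraic, a standard fact from geometric stability produces $\alpha\in\acl(Aa_1')\setminus A$ with $\tp(\alpha/A)$ being $\mathbf{R}_1$-internal. The rank argument applied to $\alpha$ in place of $a_i'$ shows $U(\alpha/Ab)=1$, so $\alpha$ is interalgebraic with $a$ over $Ab$; hence it suffices to show $\tp(\alpha/Ab)\not\perp r_1$. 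Because $\mathbf{R}_1$ is $A$-invariant, internality transfers from base $A$ to base $Ab$: given an internality witness $(E,\bar\rho)$ for $\tp(\alpha/A)$, one takes an $A\alpha$-conjugate $(E',\bar\rho')$ independent from $b$ over $A\alpha$; combining $\alpha\downarrow_A E'$ with $E'\downarrow_{A\alpha} b$ gives $\alpha b\downarrow_A E'$, whence the pairs lemma yields $\alpha\downarrow_{Ab} E'$, while $\bar\rho'$ still realises types in $\mathbf{R}_1$ by the $A$-invariance. Hence $\tp(\alpha/Ab)$ is almost $\mathbf{R}_1$-internal, so nonorthogonal to $r_1$.

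The main technical obstacle I foresee is this transfer of internality from $A$ to $Ab$: $b$ is not independent from $\alpha$ over $A$, so the independence condition in the definition of internality is not inherited on the nose; the conjugation-and-pairs-lemma manoeuvre described above is what makes this go through, and is exactly the kind of manipulation of internality witnesses one expects in this setting.
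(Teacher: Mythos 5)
Your rank argument up to the conclusion that the $a_i'$ are pairwise interalgebraic over $Ab$ is correct, but the next step — asserting that interalgebraicity of $a_i'$ and $a_j'$ over $Ab$ forces $\cb(a_i'/Ab)$ and $\cb(a_j'/Ab)$ to be interalgebraic over $A$ ``by standard properties of canonical bases'' — is not a standard fact, and it is false in general. A quick counterexample in $\mathrm{ACF}_0$: take generics $x,y$, set $b=xy$, $a_i'=x$, $a_j'=x(1+y)$. These are interalgebraic over $b$ and independent over $\emptyset$, yet $\cb(x/xy)$ is interalgebraic with $xy$ while $\cb(x(1+y)/xy)=\acl(\emptyset)$. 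Nothing in the conclusion of Lemma~\ref{chatzidakisprop} rules out this kind of phenomenon, so this step is a genuine gap.

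The irony is that you don't need it: once you have $a_i'$ and $a_j'$ interalgebraic over $Ab$ with $U(a_i'/Ab)=U(a_j'/Ab)=1$, you already have a contradiction for $k\geq 2$. Since $\tp(a_i'/A)$ is $\mathbf{R}_i$-analysable (so is its extension $\tp(a_i'/Ab')$) and the $\mathbf{R}_i$ are mutually orthogonal, the types $\stp(a_i'/Ab')$ are pairwise orthogonal, hence $a_i'\mathop{\smile\hskip-0.9em^|\ }_{Ab'} a_j'$; this is incompatible with interalgebraicity of nonalgebraic tuples. This is essentially the route the paper takes, except that the paper skips the Lascar-inequality detour entirely and goes straight from $U(a_i'/Ab')\geq 1$ plus mutual orthogonality to $U(a'/Ab')\geq k$, which must equal $U(a/Ab)=1$, forcing $k=1$. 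For the second half, your ``extract the first internal layer $\alpha$, then transfer internality to base $Ab$'' argument is correct, but it is more machinery than needed: once $k=1$, $\tp(a'/A)$ is $\mathbf{R}_1$-analysable, hence so is the extension $\tp(a/Ab)$ (using the interalgebraicities), and a nonalgebraic $\mathbf{R}_1$-analysable type is automatically nonorthogonal to $r_1$. No explicit internality-witness conjugation is required.
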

\begin{proof}
Recall that $a_i'\notin A$ and $a'$ is interalgebraic with $(a_1'\dots,a_k')$ over $A$.
So $\acl(Aa_i')\cap\acl(Ab')=A$ and it follows that $a_i'\notin\acl(Ab')$.
Hence $U(a_i'/Ab')\geq 1$ for each $i=1,\dots,k$.
Since each $\stp(a_i'/Ab')$ is ${\bf R}_i$-analysable and the $r_i$'s are mutually orthogonal, it follows that $U(a'/Ab')\geq k$.
But as $b$ and $b'$ are interalgebraic over $A$ and $a$ and $a'$ are interalgebraic over $Ab$, it follows that $U(a'/Ab')=U(a/Ab)=1$.
So $k=1$ and $\stp(a'/Ab')$ is ${\bf R}_1$-analysable.
Hence $q_b=\tp(a/Ab)$ is ${\bf R}_1$-analysable, and so nonorthogonal to $r_1$ as desired.
\end{proof}

We can now prove (i).
Since by the claim $r_1$ is nonorthogonal to $q_b$, and since $r_1$ is nonorthogonal to $A$, it follows that $q_b$ is nonorthogonal to $A$.
Moreover, $b$ is interalgebraic with $b_1'$ over $A$.
It follows that $\tp(b/A)$ is almost analysable in $r_1$, and hence in $q_b$.
But $\tp(a/Ab)=q_b$.
So $\tp(a/A)$ is almost analysable in $q_b$.

For (ii) we already know from Corollary~\ref{corstrongcbp} that the CBP implies that $\tp(b/A)$ is almost $\mathbb{P}$-internal.
On the other hand, we have just seen that $\tp(b/A)$ is almost analysable in $q_b\in\mathbb{P}$.
Hence, $\tp(b/A)$ must be almost internal to $q_b$.
On the other hand, as the fibres have rank $1$, we know that $q$ almost separates points of $p$ (cf.~part~(b) of Remark~\ref{genrem}).
So $p^{\overline{M}}\subseteq\acl\big(As^{\overline{M}}\big)$, and hence $p$ is almost internal to $q_b$ as well.
This completes the proof of Proposition~\ref{theoremrank1}.
\end{proof}

The following lemma will enable us to reduce Theorem~\ref{ample-by-moishezon=moishezon} to Proposition~\ref{theoremrank1}. 

\begin{lemma}
\label{moishezon=veryample}
Suppose $r$ is a stationary type of rank at least $2$ that is almost internal to a nonmodular minimal type.
Then $r$ has a canonical generating family whose fibres are of rank $1$.
\end{lemma}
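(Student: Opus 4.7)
Set $r=\tp(a/A)$, WLOG $A=\acl(A)$ and $n:=U(a/A)\ge 2$. The plan is to construct $b$ so that the type of $a$ over $Ab$ is a generic rank-$1$ ``section'' through $a$ of the locus of $r$, with $b$ recording the section.

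First I would reduce to a tight coordinatization of $r$ coming from the almost-internality hypothesis. Pick $C\supseteq A$ with $a\downarrow_A C$ such that $\tp(a/C)$ is almost internal to a $C$-conjugate $q$ of the given minimal type; by matching ranks, extract a Morley sequence $\bar e=(e_1,\dots,e_n)$ in $q$ over $C$ with $a$ and $\bar e$ interalgebraic over $C$.

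Next I would produce the generic section. Choose a fresh Morley tuple $\bar f=(f_2,\dots,f_n)$ in $q$ over $C$ with $\bar f\downarrow_C a$, and form $b$ as the canonical base over $A$ of an appropriate rank-$(n-1)$ subtype of $\tp(a/C\bar f)$ built by mixing the $a$-coordinatizing $\bar e$ with the $a$-independent $\bar f$ via the geometry of $q$. The subtype is designed so that $U(a/Ab)=1$ and $b=\cb\bigl(\tp(a/Ab)\bigr)$ by construction, giving conditions~(ii) and~(iii) of Definition~\ref{ample} at once, with stationarity inherited from the canonical-base construction.

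The main obstacle is condition~(i) of Definition~\ref{ample}: $\acl(Aa)\cap\acl(Ab)=\acl(A)$. It is instructive that the ``obvious'' choice $b=\cb\bigl(\tp(a/Ce_2\dots e_n)\bigr)$ does \emph{not} work --- such $b$ lies inside $\acl(Ca)$, and the common subtuple $(e_2,\dots,e_n)$ then puts $\acl(Aa)\cap\acl(Ab)$ strictly above $\acl(A)$. The mixing with the fresh $\bar f$ is exactly what prevents this: after the mix, every nontrivial element of $\acl(Ab)$ carries $\bar f$-information and so cannot lie in $\acl(Aa)$, by $\bar f\downarrow_{AC} a$ and a transitivity-of-independence computation. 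Producing the mixing abstractly is where nonmodularity of $q$ is used, since it supplies the nontrivial definable relations between independent Morley tuples of $q$ needed to link $\bar e$ and $\bar f$. Geometrically this is the abstract counterpart of Campana's Bertini-type trick for a compact K\"ahler $X$ of dimension $\ge 2$: the rank-$1$ subfamilies through a point $x\in X$ are generic hyperplane sections through $x$, not ``coordinate projections,'' and it is exactly this genericity that enforces the analogue of~(i).
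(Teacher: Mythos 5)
Your high-level plan is recognisably the same as the paper's: pass to a model $C$ (the paper's $M$) over which $\tp(a/C)$ is coordinatised by a Morley sequence $\bar e$ in the minimal type $q$, then use the nonmodularity of $q$ to cut out a rank-$1$ subfamily with a ``generic'' canonical base. The paper's proof also uses this coordinatization and also chooses the auxiliary parameters ``as freely as possible,'' which is the substance of your intuition that mixing in fresh data is what rescues condition~(i). So the instincts are right; the problem is that the two load-bearing steps are not actually carried out.

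First, the construction of $b$ is a placeholder. ``An appropriate rank-$(n-1)$ subtype of $\tp(a/C\bar f)$ built by mixing $\bar e$ with $\bar f$ via the geometry of $q$'' describes a desideratum, not a construction. The paper makes this concrete in a specific way: for each $i\ge 2$, nonmodularity of $q$ supplies $b_i$ with $\tp(a_1,a_i/b_i)$ stationary of rank $1$, $b_i=\cb(a_1,a_i/b_i)$, and --- crucially --- $U(b_i)\ge 2$ (a nonmodular ``plane curve'' through the pair), the $b_i$'s being chosen independently and as freely as possible over the rest; then $b=(b_2,\dots,b_n)$. Your $\bar f$ has no role in this --- the ``freshness'' you want is the free choice of the $b_i$'s, not an extra Morley tuple in $q$. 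Without saying what ``mixing via the geometry of $q$'' means, you have no object to prove things about, and the assertion ``designed so that $U(a/Ab)=1$ and $b=\cb(\tp(a/Ab))$'' is begging precisely the part that needs an argument.

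Second, you miss the reduction that makes condition~(i) tractable, and your direct attempt at it does not work. The paper first shows that it suffices to produce a rank-$1$ stationary extension $q_b$ of $r$ admitting two realisations $c,c'$ independent over $\emptyset$; given that, any $d\in\acl(c)\cap\acl(b)$ is pushed into $\acl(c)\cap\acl(c')=\acl(\emptyset)$. That is what reduces $\acl(Aa)\cap\acl(Ab)=\acl(A)$ to a concrete rank computation. You instead try to verify (i) head-on with ``after the mix, every nontrivial element of $\acl(Ab)$ carries $\bar f$-information and so cannot lie in $\acl(Aa)$'' --- but this is not a proof; there is no reason an arbitrary element of $\acl(Ab)$ should carry ``$\bar f$-information'' in a sense you can exploit, and in any case you haven't defined $b$. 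The rank computation the paper actually does (if $(a_1',a_i')$ forked with $(a_1,a_i)$ then $b_i\in\acl(a_1,a_i)$, forcing $U(b_i)\le 1$, contradiction) is the concrete content behind your gesture, and it depends on having $U(b_i)\ge 2$ --- a feature your sketch never secures. A further small point: your diagnosis of why the ``obvious'' $b=\cb(\tp(a/Ce_2\dots e_n))$ fails (``the common subtuple $(e_2,\dots,e_n)$'') is off, since the $e_i$'s live in $\acl(Ca)$, not $\acl(Aa)$; the real issue is subtler and is exactly what the $(*)$ reduction plus the $U(b_i)\ge 2$ device are designed to handle.
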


\begin{proof}
There is no harm in assuming $r$ to be over $\emptyset$.
It suffices to show
\begin{itemize}
\item[($*$)]
there is some rank $1$ stationary extension $q_b\in S(b)$ of $r$ such that there are realizations $c_{1}, c_{2}$ of $q_b$ which 
are independent over $\emptyset$. 
\end{itemize}
Indeed, assume such a $q_b$ exists and let $c\models q_b$.
Restricting $q_b$ we may assume $b = \cb(q_b)$.
Hence in order to see that $q:=\stp(c,b)$ is a canonical generating family for $r$ over $\stp(b)$ it remains to show that $\acl(c)\cap\acl(b)=\acl(\emptyset)$.
Suppose $d\in \acl(c)\cap \acl(b)$.
Then, as $q_b$ is stationary, if $c'\models q_b$ is independent from $c$ over $\emptyset$ then $d\in \acl(c)\cap \acl(c')=\acl(\emptyset)$, as desired.
So ($*$) suffices.

We next prove the lemma in the special case that $r = \tp(a_{1},..,a_{n})$ where $\{a_{1},..,a_{n}\}$
is an independent set of realizations of a nonmodular stationary rank $1$ type $p(x)$. By nonmodularity
of $p$ we can find, for each $i=2,..,n$, some $b_{i}$ such that $\tp(a_{1},a_{i}/b_{i})$ is stationary of
rank $1$, $b_{i}$ is the canonical base of $\tp(a_{1},a_{i}/b_{i})$, and $U(b_{i})\geq 2$. (See 
Chapter 2 of ~\cite{pillay96}.) Choose the 
$b_{i}$ as freely as possible, namely $\{b_{2},..,b_{n}\}$ is independent
 over $\emptyset$, and $\{a_{1},a_{i}\}$ is independent from all the other $a_{j}$'s together with all the 
$b_{j}$'s
 over $b_{i}$ (for each $i = 2,..,n$). Let $b = (b_{2},..,b_{n})$. Then $\stp(a_{1},..,a_{n}/b)$ has rank $1$
and $b$ is interalgebraic with the canonical base of the latter strong type. Let $(a_{1}',..,a_{n}')$ be a 
realization of
$\stp(a_{1},..,a_{n}/b)$, independent from $(a_{1},..,a_{n})$ over $b$.
Now if $U(a_1',a_i'/a_1,a_i) = 1$ then $(a_1',a_i')$ is independent from
$(a_1,a_i,b_i)$ over $(a_1,a_i)$.
But $b_i=\cb(a_1',a_i'/b)=\cb(a_1',a_i'/a_1,a_i,b)$, so $b_i\in\acl(a_1,a_i)$.
But then computing the rank of $\tp(a_1,a_i,b_i)$ shows that $U(b_i) \leq 1$, which is a contradiction.
So, for each $i$,  $(a_1,a_i)$ must be independent of $(a_1',a_i')$ over $\emptyset$.
It is now routine to conclude, using our free choice of the $b_{i}$'s, that $(a_{1}',...,a_{n}')$ 
is independent from $(a_{1},..,a_{n})$ over $\emptyset$. 
So ($*$) has been proved in the special case.

Now for the general case. Our assumption on $r$ implies that there is a realization $c$ of $r$, a model 
$M$ independent from $c$, a nonmodular rank $1$ type $p$ over $M$, and an $M$-independent set 
$\{a_{1},..,a_{n}\}$ ($n\geq 2$) of realizations of $p$ which is interalgebraic with $c$ over $M$. 
Denote $(a_{1},..,a_{n})$ by $a$. Let $b$ be given by the special case treated above, namely $\tp(a/M,b)$ has  
rank $1$, and if $a'$ realizes $\stp(a/M,b)$ independently from 
$a,b$ over $M$, then $a'$ is independent from $a$ over $M$. Note that $\tp(c/M,b)$ has rank $1$ too. 
Now let $(a,c)$ and $(a',c')$ be independent (over $M,b$) realizations of $\stp(a,c/M,b)$. As $a'$ is 
independent from $a$ over $M$ it follows (from the interalgebraicity of $c$ and $a$ over $M$), that $c'$
and $c$ are independent over $M$, and thus independent over $\emptyset$. So $c'$ and $c$ are  
realizations of $\stp(c/M,b)$ which are independent over $\emptyset$. This proves ($*$) for $r$.
\end{proof}

\subsection{Proof of Theorem~\ref{ample-by-moishezon=moishezon}}
Write $p=\tp(a/A)$, $q=\tp(a,b/A)$, and $s=\tp(b/A)$, such that $q$ generates $p$ over $s$ and $q_b$ is internal to a nonmodular minimal type $r$.

Without loss of generality assume that $A=\acl(A)$.
Let $b'=\cb(a/Ab)$.
Then $\tp(a/Ab')$ is stationary and almost internal to $r$, and $\acl(Aa)\cap\acl(Ab')=A$.
So the hypotheses of the theorem are preserved by replacing $b$ with $b'$.
We may therefore assume that $b=\cb(a/Ab)$ and $q$ is thus a canonical generating family.
If $q$ were a trivial generating family then $q_b=p$ and the theorem would follow.
So we may assume that $q$ is a nontrivial canonical generating family.

If $U(q_b)=1$ then the theorem follows from Proposition~\ref{theoremrank1}.
So we may assume that the fibres are of rank at least $2$.
Applying Lemma~\ref{moishezon=veryample} we obtain $c$ such that
\begin{itemize}
\item
$\tp(a/Abc)$ is stationary of rank $1$,
\item
$c\in\cb(a/Abc)$, and
\item
$\acl(Aba)\cap\acl(Abc)=\acl(Ab)$.
\end{itemize}
Letting $\hat c=\cb(a/Abc)$ we have that
\begin{itemize}
\item
$\hat c\notin A$,
\item
$\tp(a/A\hat c)$ is stationary of rank $1$,
\item
$\hat c=\cb(a/A\hat c)$, and
\item
$\acl(Aa)\cap \acl(A\hat c)=A$.
\end{itemize}
That is, $\tp(a,\hat c/A)$ is a nontrivial canonical generating family for $p$ over $\tp(\hat c/A)$ with fibre $\tp(a/A\hat c)$ of rank $1$.
Hence Proposition~\ref{theoremrank1} yields
\begin{itemize}
\item[(i)$^\prime$]
$\tp(a/A\hat c)$ is nonorthogonal to $A$ and $p$ is almost analysable in $\tp(a/A\hat c)$
\item[(ii)$^\prime$]
If moreover $T$ has the CBP then $p$ is almost internal to $\tp(a/A\hat c)$.
\end{itemize}
But since $q_b=\tp(a/Ab)$ is almost internal to $r$ so is the extension $\tp(a/Abc)$ and hence also $\tp(a/A\hat c)$.
So (i)$^\prime$ and (ii)$^\prime$ yield
\begin{itemize}
\item[(i)]
$r$ is nonorthogonal to $A$ and $p$ is almost analysable in $r$
\item[(ii)]
If moreover $T$ has the CBP then $p$ is almost internal to $r$.
\end{itemize}
This completes the proof of Theorem~\ref{ample-by-moishezon=moishezon}.
\qed


\end{document}